\renewcommand\Affilfont{\normalsize}
\renewcommand\AB@affilsepx{, \protect\Affilfont}
\DeclareFontFamily{U}{min}{}
\DeclareFontShape{U}{min}{m}{n}{<-> udmj30}{}
\DeclareSymbolFont{yhlargesymbols}{OMX}{yhex}{m}{n}
\DeclareMathAccent{\yhwidehat}{\mathord}{yhlargesymbols}{"62}
\titleformat*{\section}{\large\scshape\center}
\titleformat*{\subsection}{\normalsize\scshape}
\titleformat*{\subsubsection}{\normalsize}
\tikzset{>=pxto}
\tikzset{mid vert/.style={
  /utils/exec=\tikzset{every node/.append style={outer sep=0.8ex}},
  postaction=decorate,
  decoration={
    markings,
    mark=at position 0.5 with {\draw[solid,-] (0,#1) -- (0,-#1);}}},
  mid vert/.default=0.75ex}
\tikzset{emptyNode/.style={shape=coordinate}} 
\DeclareMathOperator{\RelSemiCat}{\mathsf{RelSCat}}
\DeclareMathOperator{\Iso}{\mathsf{Iso}}
\DeclareMathOperator{\Mono}{\mathsf{Mono}}
\let\De\Delta
\newcommand\bDe{\boldsymbol{\De}}
\let\ea\expandafter
\def\foreachletter#1#2#3{\foreachcount=#1
  \ea\loop\ea\ea\ea#3\@alph\foreachcount
  \advance\foreachcount by 1
  \ifnum\foreachcount<#2\repeat}
\def\foreachLetter#1#2#3{\foreachcount=#1
  \ea\loop\ea\ea\ea#3\@Alph\foreachcount
  \advance\foreachcount by 1
  \ifnum\foreachcount<#2\repeat}
\def\definescr#1{\ea\gdef\csname s#1\endcsname{\ensuremath{\mathscr{#1}}\xspace}}
\def\definecal#1{\ea\gdef\csname c#1\endcsname{\ensuremath{\mathcal{#1}}\xspace}}
\def\definebold#1{\ea\gdef\csname b#1\endcsname{\ensuremath{\mathbf{#1}}\xspace}}
\def\definebb#1{\ea\gdef\csname d#1\endcsname{\ensuremath{\mathbb{#1}}\xspace}}
\def\definefrak#1{\ea\gdef\csname fr#1\endcsname{\ensuremath{\mathfrak{#1}}\xspace}}
\def\definesf#1{\ea\gdef\csname sf#1\endcsname{\ensuremath{\mathsf{#1}}\xspace}}
\def\definebar#1{\ea\gdef\csname #1bar\endcsname{\ensuremath{\overline{#1}}\xspace}}
\def\definetil#1{\ea\gdef\csname #1til\endcsname{\ensuremath{\widetilde{#1}}\xspace}}
\def\definehat#1{\ea\gdef\csname #1hat\endcsname{\ensuremath{\widehat{#1}}\xspace}}
\def\definechk#1{\ea\gdef\csname #1chk\endcsname{\ensuremath{\widecheck{#1}}\xspace}}
\def\defineul#1{\ea\gdef\csname u#1\endcsname{\ensuremath{\underline{#1}}\xspace}}
\def\rightarrowtailfill@{\arrowfill@{\Yright\joinrel\relbar}\relbar\rightarrow}
\newcommand\xrightarrowtail[2][]{\ext@arrow 0055{\rightarrowtailfill@}{#1}{#2}}
\newcommand{\vbto}{}
\newcommand{\vblongto}{}
\DeclareRobustCommand{\vbto}{\mathrel{\mathpalette\p@to@gets\to}}
\DeclareRobustCommand{\vblongto}{\mathrel{\mathpalette\p@longto@gets\longrightarrow}}
\newcommand{\p@to@gets}[2]{\ooalign{\hidewidth$\m@th#1\mapstochar\mkern5mu$\hidewidth\cr$\m@th#1\to$\cr}}
\newcommand{\p@longto@gets}[2]{\ooalign{\hidewidth$\m@th#1\mapstochar\mkern5mu$\hidewidth\cr$\m@th#1\longrightarrow$\cr}}
\newcommand{\leqnomode}{\tagsleft@true\let\veqno\@@leqno} 
\newcommand{\adjunction}{\@ifstar\named@adjunction\normal@adjunction}
\newcommand{\normal@adjunction}[4]{%
  #1\colon #2%
  \mathrel{\vcenter{%
    \offinterlineskip\m@th
    \ialign{%
      \hfil$##$\hfil\cr
      \longrightharpoonup\cr
      \noalign{\kern-.3ex}
      \smallbot\cr
      \noalign{\kern.2ex}
      \longleftharpoondown\cr}}}%
  #3 \noloc #4}
\newcommand{\named@adjunction}[4]{%
  #2%
  \mathrel{\vcenter{%
    \offinterlineskip\m@th
    \ialign{%
      \hfil$##$\hfil\cr
      \scriptstyle#1\cr
      \noalign{\kern.1ex}
      \longrightharpoonup\cr
      \noalign{\kern-.3ex}
      \smallbot\cr
      \longleftharpoondown\cr
      \scriptstyle#4\cr}}}%
  #3}
\newcommand{\longrightharpoonup}{\relbar\joinrel\rightharpoonup}
\newcommand{\longleftharpoondown}{\leftharpoondown\joinrel\relbar}
\newcommand\noloc{%
  \nobreak
  \mspace{6mu plus 1mu}
  {:}
  \nonscript\mkern-\thinmuskip
  \mathpunct{}
  \mspace{2mu}}
\newcommand{\smallbot}{%
  \begingroup\setlength\unitlength{.25em}%
  \begin{picture}(1,1)
  \roundcap
  \polyline(0,0)(1,0)
  \polyline(0.5,0)(0.5,1)
  \end{picture}%
  \endgroup}
\newcommand{\fDel}{\underline{\bDe}}
\newcommand{\id}{\mathsf{id}}
\newcommand{\fat}[1]{\underline{#1}}
\newcommand{\Arr}[1]{#1^{\to}}
\newcommand{\blank}{\mathord{\hspace{0.25ex}\underline{\hspace{1ex}}\hspace{0.25ex}}}
\renewcommand{\:}{\colon}
\newtheoremstyle{TheoremNum}
{\topsep}{\topsep}              
{\itshape}                      
{}                              
{\bfseries}                     
{.}                             
{ }                             
{\thmname{#1}\thmnote{ \bfseries #3}}
\theoremstyle{TheoremNum}
\newtheorem{thmn}{Theorem}
\theoremstyle{plain}
\newtheorem{thm}{Theorem}[section]
  \Crefname{thm}{Theorem}{Theorems}
\newtheorem{prop}[thm]{Proposition}
  \Crefname{prop}{Proposition}{Propositions}
\newtheorem{cor}[thm]{Corollary}
  \Crefname{cor}{Corollary}{Corollaries}
\newtheorem{lem}[thm]{Lemma}
  \Crefname{lem}{Lemma}{Lemmas}
\theoremstyle{definition}
\newtheorem{dfn}[thm]{Definition}
  \Crefname{dfn}{Definition}{Definitions}
\declaretheorem[sibling=thm,style=definition,title={},qed=\rotatebox{90}{\ensuremath\lfloor}]{para}
  \Crefname{para}{\S\!}{\S\!}
\theoremstyle{remark}
\newtheorem{rmk}[thm]{Remark}
  \Crefname{rmk}{Remark}{Remarks}
\title{
		On combinatorial aspects of fat Delta}
\author{Sti\'{e}phen Pradal \orcidlink{0009-0003-9733-5228}}
\affil{School of Computer Science\\
       University of Nottingham}
\date{\vspace{-1em}
		\normalsize\today}
\newcommand{\fVert}{\cV}
\newcommand{\fDiag}{\cD}
\newcommand{\fHorz}{\cH}
\newcommand{\face}{\delta}
\newcommand{\osum}{\oplus}
\newcommand{\mosum}{\mathbin{\check{\oplus}}}
\newcommand{\vpro}{\vee}
\newcommand{\ffib}[2]{#1^{#2}}
\newcommand{\ftop}[1]{\bar{#1}}
\newcommand{\fbot}[1]{\underaccent{\bar}{#1}}
\newcommand{\vface}[1]{\mathsf{v}^{#1}}
\newcommand{\sface}[1]{\mathsf{d}_{#1}}
\newcommand{\dface}[1]{\mathsf{s}_{#1}}
\newcommand{\bbface}[2]{\mathsf{w}_{#1}^{#2}}
\newcommand{\vbound}[2]{\partial^{#2}#1}
\newcommand{\sbound}[2]{\partial_{#2}#1}
\newcommand{\dbound}[2]{\mathfrak{d}_{#2}#1}
\newcommand{\aint}[2]{#2\lceil #1\rceil}
\newcommand{\ip}[2]{\varsigma^{0}_{#2#1}}
\newcommand{\ep}[2]{\varsigma^{1}_{#2#1}}
\newcommand{\bv}[2]{\varsigma^{#1}_{#2}}
\newcommand{\fn}{\boldsymbol{\eta}}
\newcommand{\fm}{\boldsymbol{\mu}}
\newcommand{\fk}{\boldsymbol{\kappa}}
\newcommand{\dvp}{\dot{p}}
\newcommand*{\rev}[1]{
  \protect\leavevmode
  \begingroup
    \color{red}#1%
  \endgroup
    }
\newcommand{\mlgar}{\rev{\longrightarrow}}
\begin{document}
\maketitle
\vspace{-1cm}
\abstract{
The category fat Delta, introduced by J.\ Kock, is a modification of the simplex category where the degeneracies behave weakly.
The objective of this note is to provide tools for working with fat Delta.
In particular, we identify three types of morphisms: degenerated, standard and vertical faces, and establish six relations between these classes. We then show that fat Delta is generated by these morphisms and relations.
}


\setcounter{tocdepth}{2}
\tableofcontents
\newpage

\addtocontents{toc}{\protect\setcounter{tocdepth}{1}}
\section{Introduction}\label{sec:intro}
The fat Delta category, denoted by $\fDel$, was originally introduced by J.\ Kock \cite{Kock2006} in the context of Simpson's conjecture as a tool for studying weak units in higher categories.
Independently, a variant (with the same objects but fewer morphisms) was introduced by C.\ Sattler and N.\ Kraus \cite{KS2017} in the context of type theory, aiming to replace the simplex category $\Delta$ by a direct category (a category whose morphisms go only in one direction).
Despite their differing motivations, both approach agreed on the need of modifying the degeneracies of $\Delta$ in order to provide a diagrammatic interpretation of weak identity structures in higher category theory.

This paper introduces several tools for working with $\fDel$ and provides a number of essential properties of this category, with the goal of facilitating future use of it.
Although $\fDel$ admits different descriptions \cite{Kock2006}, its objects and morphisms can prove challenging to work with, as it is not as well understood as the simplex category $\Delta$.

Our work establishes a presentation of $\fDel$ in terms of generators and relations.
A key feature of the simplex category is its presentation by faces and degeneracies.
Given that $\fDel$ is a modification of $\Delta$, in the sense that the degeneracies behave weakly in $\fDel$, they share many similarities.
In particular, the simplicial faces, degeneracies and relations can be lifted to $\fDel$, resulting in three classes of faces and six relations.

\subsection{Main results}
In this paper, we study the category $\fDel$ as presented by generators and relations (\cref{sec:genrel}). We begin by distinguishing three types of morphisms, represented by specific squares in $\Delta$, which we call degenerated, vertical and standard faces, respectively:
\begin{equation*}
	\begin{tikzcd}
		{[m]}
			\ar[r, equal]
			\ar[d, "\fn"', twoheadrightarrow]
			\ar[dr,"\dface{i}" description, phantom] &
		{[m]}
			\ar[d, "\dbound{\fn}{i}", twoheadrightarrow] \\
		{[n]}
			\ar[r, "\sigma_i"', twoheadrightarrow] &
		{[n-1]}
	\end{tikzcd}
	\quad \text{and}\quad
	\begin{tikzcd}
		{[m-1]}
			\ar[r, "\face_{i}", mid vert, hook]
			\ar[d, "\vbound{\fn}{i}"', twoheadrightarrow]
			\ar[dr,"\vface{i}" description, "\ulcorner" very near end, phantom] &
		{[m]}
			\ar[d, "\fn", twoheadrightarrow] \\
		{[n]}
			\ar[r, equal] &
		{[n]}
	\end{tikzcd}
	\quad \text{and}\quad
	\begin{tikzcd}
		{[m-1]}
			\ar[r, "\face_{\bv{}{i}}",hook]
			\ar[d, "\sbound{\fn}{i}"', twoheadrightarrow]
			\ar[dr,"\sface{i}" description, phantom] &
		{[m]}
			\ar[d, "\fn", twoheadrightarrow] \\
		{[n-1]}
			\ar[r, "\face_i"',hook] &
		{[n]}
	\end{tikzcd}
	\end{equation*}

Each of these morphisms plays a unique role in the structure of $\fDel$.
Vertical faces are tied to active maps in $\Delta$, denoted by the arrow $\vblongto$, and act by splitting a marked edge into two by inserting an inner marked vertex.
Standard faces, on the other hand, split an unmarked edge into two by inserting a standard vertex $\bv{}{i}$.
More generally, the natural number $\ep{}{i}$ (resp.\ $\ip{}{i}$) denotes the maximal (resp.\ minimal) vertex in the fibre of $\fn$ over $i$.
Degenerated faces stand apart as the only faces capable of turning unmarked edges into marked ones.
A special composition, denoted by $\bbface{i}{\epsilon}$, combines degenerated and standard faces as $\dface{i}\sface{i+\epsilon}$, where $\epsilon$ takes the value $0$ or $1$.
All the notation and terminology helping us navigate the intricate structure of $\fDel$ is made precise throughout the paper.

These three classes of faces obey the following six relations:

\noindent
\begin{minipage}{.5\linewidth}
\begin{align}
	\dface{j}\dface{i} &= \dface{i}\dface{j+1} \hspace{0.825cm} i\leq j\leq n, \tag{\ref{rel:dd}}\\
	\sface{i}\sface{j} &= \sface{j+1}\sface{i} \hspace{0.725cm} i\leq j\leq n, \tag{\ref{rel:hh}}\\
	\vface{i}\vface{j} &= \vface{j+1}\vface{i} \hspace{0.725cm} i\leq j\leq m, \tag{\ref{rel:vv}}
\end{align}	
\end{minipage}\smallskip
\noindent
\begin{minipage}{.5\linewidth}
\begin{align}
	\sface{i}\vface{j} &=
	\begin{cases}
		\vface{j+1}\sface{i} & \bv{}{i} < j \leq m \\
		\vface{j}\sface{i}   & j < \bv{}{i}
	\end{cases} \tag{\ref{rel:hv}}\\
	\vface{j}\dface{i} &=
	\begin{cases}
		\dface{i}\bbface{i+1-\epsilon}{\epsilon} & j=\bv{}{i+1} \\
		\dface{i}\vface{j}   & j \neq \bv{}{i+1}
	\end{cases} \tag{\ref{rel:dv}}\\
	\dface{j}\sface{i} &=
	\begin{cases}
		\sface{i}\dface{j-1} & i<j\leq n \\
		\bbface{j}{\epsilon}	   & i=j+\epsilon \\
		\sface{i-1}\dface{j} & j+1<i \leq n
	\end{cases}\tag{\ref{rel:hd}}
\end{align}	
\end{minipage}\smallskip

\noindent
Finally, we prove that any morphism in $\fDel$ factors uniquely, up to these relations, as a composition of degenerated, vertical and standard faces.

\begin{thmn}[\ref{thm:grdu}]
	Any map in $\fDel$ factors uniquely up to relations \eqref{rel:dd}--\eqref{rel:hd} as a composition of degenerated, vertical and standard faces.
\end{thmn}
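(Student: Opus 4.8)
The plan is to read the statement as the assertion that the canonical functor $\Phi$ from the category $\mathcal{P}$ presented by the three classes of faces and the relations \eqref{rel:dd}--\eqref{rel:hd} to $\fDel$ is an isomorphism. Since those relations have been verified in $\fDel$ and $\Phi$ is the identity on objects, it remains to prove that $\Phi$ is full (every morphism of $\fDel$ is a composite of degenerated, vertical and standard faces --- the ``factors'' part) and faithful (two such composites that agree in $\fDel$ differ by the relations --- the ``uniquely up to relations'' part).

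For fullness I would use the description of morphisms of $\fDel$ developed in the earlier sections. Every morphism decomposes, in that model, into a surjective part and an inclusion part; the surjective part is a composite of degenerated faces $\dface{i}$, just as a surjection of $\Delta$ is a composite of codegeneracies, while the inclusion part is built by successively inserting vertices, an insertion into a marked edge producing a vertical face $\vface{i}$ and an insertion of a standard vertex $\bv{}{i}$ into an unmarked edge producing a standard face $\sface{i}$. Hence every morphism is realized by some word in the generators. I would then normalize such a word: the three mixed relations \eqref{rel:hv}, \eqref{rel:dv}, \eqref{rel:hd} say that each of the three classes commutes past the others up to the prescribed index shifts, so a word can be sorted into the shape $(\text{degenerated})(\text{vertical})(\text{standard})$, and within each block the coface/codegeneracy-type relations \eqref{rel:dd}, \eqref{rel:vv}, \eqref{rel:hh} put the indices into canonical monotone order, exactly as for faces and degeneracies of $\Delta$. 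The genuine subtlety is that a degenerated and a standard face at adjacent indices do not swap but coalesce into a weak-unit composite $\bbface{i}{\epsilon}=\dface{i}\sface{i+\epsilon}$, and that the first case of \eqref{rel:dv} feeds a standard face back into the degenerated block, so these weak-unit patterns must be allowed to persist at the degenerated/standard interface while the standard faces they carry migrate back through the vertical block. I would therefore take the normal form to be a word with no remaining reducible adjacency in this sense, and justify termination of the rewriting by a well-founded measure dominating this cascade; this produces, for each morphism of $\fDel$, at least one word in normal form.

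For faithfulness it is enough that distinct normal-form words realize distinct morphisms of $\fDel$, and for this I would construct a section of the realization map: a function $N$ sending a morphism $f$ of $\fDel$ to a normal-form word, read off the data of $f$ in the explicit model --- the degenerated block from which fibres the underlying surjection collapses (in the canonical order), the vertical and standard blocks from the inserted vertices (a marked inserted edge contributing a vertical face, an unmarked one a standard face, indices again canonically ordered), and each interface parameter $\epsilon$ from the residual marking datum distinguishing $\bbface{i}{0}$ from $\bbface{i}{1}$. That $N(f)$ realizes to $f$ is essentially the fullness construction; that $N$ applied to the realization of a normal-form word returns that same word is a finite check on the index sequences. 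So realization restricts to a bijection between normal-form words and morphisms of $\fDel$, and combined with the reduction of every word of $\mathcal P$ to a normal-form word this gives that $\Phi$ is an isomorphism --- which is the asserted unique factorization. (Alternatively one can skip $N$ and argue by a count, matching the number of normal-form words $[m]\to[n]$ with $\lvert\fDel([m],[n])\rvert$.)

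I expect the real obstacle to be the bookkeeping of the weak-unit composites $\bbface{i}{\epsilon}$. In $\Delta$ the analogous composites are identities, so the classical normal-form argument never meets them and the epi/mono factorization is clean; in $\fDel$ the degeneracies are only weak, $\bbface{i}{0}$ and $\bbface{i}{1}$ are genuinely distinct non-identity endomorphisms, and all the difficulty of the presentation is concentrated at the degenerated/standard interface and in the vertical faces mediating it. It is precisely there that the naive rewriting can fail to terminate or to be confluent, so one must control the critical overlaps of \eqref{rel:dv} and \eqref{rel:hd} with the sorting relations --- the delicate shapes being $\dface{j}\sface{i}\sface{k}$, $\dface{j}\dface{i}\sface{k}$ and $\vface{j}\dface{i}\sface{k}$ --- and verify that each resolves to the same normal form. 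Everything else --- the underlying factorization and the within-block sorting --- is a faithful transcription of the corresponding facts about $\Delta$.
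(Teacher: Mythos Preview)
Your high-level strategy---treat the statement as the assertion that the functor from the syntactic category $\mathcal P$ to $\fDel$ is an isomorphism, and prove this by exhibiting a confluent terminating rewriting system with unique normal forms---is a legitimate alternative to the paper's approach, which instead constructs the factorisation geometrically (via the epi--mono factorisation of $\fbot f$ in $\Delta$ and a pullback in \cref{lem:monofact}) and then proves uniqueness by induction on the length of a word, moving one face at a time through the others.

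However, your proposed normal form is wrong, and the patch you suggest does not repair it. The shape $(\text{degenerated})(\text{vertical})(\text{standard})$ cannot be achieved: a standard face $\sface{j}$ introduces a \emph{fresh singleton fibre} $[0]^\flat$, but in the target $\fk$ that fibre will typically be nontrivial, so further vertical faces and bordering extensions are needed \emph{after} $\sface{j}$ to fill it. Concretely, any horizontal morphism whose image contains a non-singleton fibre not hit by $\fbot f$ already fails to be a word of your shape. The paper's normal form accordingly has \emph{six} blocks,
\[
f=\underbrace{\vface{t_{\iota}}\cdots\vface{t_1}}_{\tau}\;\underbrace{\bbface{i_{\alpha}}{\epsilon_{\alpha}}\cdots\bbface{i_{1}}{\epsilon_1}}_{\psi}\;\underbrace{\sface{j_{\beta}}\cdots\sface{j_{1}}}_{\delta}\;\underbrace{\vface{k_{\gamma}}\cdots\vface{k_{1}}}_{\nu}\;\underbrace{\bbface{c_{\theta}}{\epsilon_{\theta}}\cdots\bbface{c_1}{\epsilon_1}}_{\phi}\;\underbrace{\dface{l_{\lambda}}\cdots\dface{l_{1}}}_{\sigma},
\]
with vertical faces and bordering extensions appearing on \emph{both} sides of the standard block; this mirrors the ternary factorisation $(\fDiag,\fVert,\fHorz)$, in which even $\fHorz$ requires the composite $\tau\psi\delta$, not just $\delta$. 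Your suggestion to let bordering extensions ``persist at the degenerated/standard interface'' therefore does not suffice: they are not confined to a single interface, and the clause ``no remaining reducible adjacency'' does not pin down a normal form.

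If you want to carry your rewriting strategy through, you would need to (i) adopt the six-block shape (or an equivalent one) as the target normal form, (ii) orient the relations---including the derived ones \eqref{rel:ww1}--\eqref{rel:sw} for bordering extensions---so that they drive words toward that shape, and (iii) check confluence of the critical pairs you identify, together with the additional ones created by the second $(\tau,\psi)$-block. The paper sidesteps all of this: it reads the six-block factorisation directly off the geometry of the square (epi--mono on the bottom, then a pullback to isolate the horizontal part, then \cref{lem:vfact}), and only afterwards uses the relations, in \cref{lem:vfactuni,lem:monouniqfact} and \cref{thm:grdu}, to show that any other word can be rewritten into that specific one.
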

As a consequence, we recover the ternary factorisation system sketched in \cite{KockCom,Sattler2017} as \cref{cor:tfs}.

We believe that this presentation, which is a non-trivial result to establish, has potential for future applications of $\fDel$, in the same way that simplicial identities have been a fundamental tool for the use of simplicial methods in homotopy and higher category theory.

\subsection{Acknowledgements}
I am grateful to Nicolai Kraus and Tom de Jong for their valuable support and guidance throughout the development of this work.
I also wish to thank Simona Paoli and Stefania Damato for their insightful comments and suggestions on an early draft, as well as Josh Chen and Mark Williams for inspiring discussions.
This work was supported by grants from the Royal Society (URF\textbackslash{}R1\textbackslash{}191055).

\addtocontents{toc}{\protect\setcounter{tocdepth}{2}}

\section{Preliminaries on fat Delta}\label{sec:fDeldef}
The category $\fDel$ was first introduced by J.\ Kock in \cite{Kock2006} through the concept of  ``coloured semiordinals'', but it can be viewed in various other ways, as mentioned by J.\ Kock in the same work and proved in \cite{JKPP2025}.
In this section, we review two descriptions introduced in \cite{Kock2006}.
First, we give a brief outline of the original description, which is useful for intuition, in terms of relative semiordinals (\cref{sec:rso}).
Next, we present a more practical description, also presented in \cite{KS2017}, in terms of a subcategory of the category of epimorphisms in $\Delta$ (\cref{sec:epifdel}). While we will use the intuition from the relative semiordinals, our work exclusively relies on the definition from \cref{sec:epifdel}.

The current section also introduces tools and notation to simplify the manipulation of the objects of $\fDel$ (\cref{sec:nota}), and the active-inert factorisation system from \cite{JKPP2025} (\cref{sec:activeinert}).

\subsection{Relative semiordinals}\label{sec:rso}
In \cite{Kock2006}, motivated by a means to give a diagrammatic interpretation of weak identity arrows in higher categories, J.\ Kock defines $\fDel$ conceptually as the full subcategory of the category of relative semicategories, $\RelSemiCat$. Relative semicategories consists of non-unital categories equipped with a wide subsemicategory. We present this description here to provide an intuition for the objects and morphisms of $\fDel$.

\begin{para}
A \emph{semiordinal} is a semicategory associated with a total strict order relation.
In particular, the relation is not reflexive, which means that there are no identity morphisms.
\end{para}

To illustrate this, we can view finite semiordinals as strings of arrows as follows:
\begin{equation}\label{diag:semord}
	0, \quad 0 \longrightarrow 1, \quad 0 \longrightarrow 1 \longrightarrow 2, \quad 0 \longrightarrow 1 \longrightarrow 2 \longrightarrow 3, \quad \ldots
\end{equation}
The category of finite non-empty semiordinals is naturally identified with the semisimplex category $\Delta_+$ (i.e.\ the simplex category $\Delta$ restricted to its monomorphisms), as the absence of identities allows only monomorphisms between semiordinals.

\begin{para}
	\emph{Relative semiordinal} are defined as free relative semicategories on linearly ordered relative graphs.
	The category $\fDel$, fat Delta, is the category of finite non-empty relative semiordinals and marking-preserving (i.e.\ colour-preserving in~\cite{Kock2006}) functors between them.
\end{para}

This description induces a fully faithful functor $\fDel \hookrightarrow \RelSemiCat$. Therefore, objects in $\fDel$ can be visualised as strings of marked and unmarked arrows. This includes objects as in \eqref{diag:semord}, as well as objects with some of the edges possibly marked (in red), such as
\begin{equation}\label{diag:lrneut}
	0 \mlgar 1 \longrightarrow 2 \quad \text{or} \quad 0 \longrightarrow 1 \mlgar 2,
\end{equation}
and fully marked semiordinals
\begin{equation}
	0 \mlgar 1, \quad 0 \mlgar 1 \mlgar 2, \quad 0 \mlgar 1 \mlgar 2 \mlgar 3, \quad \ldots
\end{equation}
Morphisms between these objects are strictly monotone injections preserving the marking, i.e.\ they map red arrows to red arrows.
Observe that unmarked edges can also be mapped to marked ones.
The motivation for introducing this marking is that the marked edges can be interpreted as (weak) identities.

This approach offers a very intuitive way of thinking about $\fDel$, however it is not the most practical for calculations.

\subsection{Epimorphisms in \texorpdfstring{$\Delta$}{Delta}}\label{sec:epifdel}
In this section, we give a different, but much more practical, perspective on $\fDel$ which we recall from \cite{Kock2006}.
Although the two descriptions are isomorphic, as shown in \cite[Theorem~4.22]{JKPP2025}, this is the definition we will use throughout the rest of the paper.
Henceforth, the notation $\fDel$ will exclusively refer to the definition presented in this section.

As mentioned above, the purpose of $\fDel$ is to provide simplicial-like methods for higher structures while offering a weak treatment of the degeneracies in~$\Delta$.
Therefore, one of the potential directions taken to construct this category consists of expanding (or ``fattening'') the simplex category $\Delta$ with all the possible degeneracies as objects.
This approach yields~$\fDel$ and enables diagrammatic access to the identity structures, similar to that of the composition structure.

\begin{para}
	The category $\fDel$ is defined as $(\Arr{\Delta}_{-})_{\Mono}$, the subcategory of monomorphisms in the arrow category of epimorphisms $\Delta_-$ of $\Delta$.
	In other words, the objects of $\fDel$ are the epimorphisms $\fn\:[m] \twoheadrightarrow [n]$ in $\Delta$, i.e.\ compositions of degeneracies, and the morphisms $f\:\fn \to \fk$ are commutative squares in $\Delta$ of the form
	\begin{equation}\label{diag:mofDel}
	\begin{tikzcd}
		{[m]}
			\ar[r, "\ftop{f}", hook]
			\ar[d, "\fn"', twoheadrightarrow]
		& {[n]}
			\ar[d, "\fk", twoheadrightarrow] \\
		{[k]}
			\ar[r, "\fbot{f}"']
		& {[l]}
	\end{tikzcd}
	\end{equation}
	whose vertical arrows are epimorphisms and horizontal top arrow is a monomorphism.
\end{para}

\begin{para}
	Let $\fn\:[m] \twoheadrightarrow [n]$ be an object of $\fDel$.
	By \emph{vertex/edge} of $\fn$ we mean a vertex/edge of the domain $[m]$, and we say that an edge is \emph{marked} if it is sent to an identity by~$\fn$.
\end{para}

In terms of \cref{sec:rso}, given an object $\fn$ in $\fDel$, the source of the epimorphism represents the underlying semiordinal of the respective relative semiordinal, and the epimorphism encodes the marking by collapsing the marked edges of the domain to the identities of the codomain.
To illustrate this point, consider the objects depicted in \eqref{diag:lrneut}, which can respectively be diagrammatically represented by the epimorphisms $\sigma_0,\sigma_1\:[2] \twoheadrightarrow [1]$ as
\begin{equation*}
\begin{tikzcd}[column sep=0.2cm]
	0
		\ar[dr,mapsto]
		\ar[rr, dash, red]
	&& 1
		\ar[dl,mapsto]
		\ar[rr, dash]
	&& 2
		\ar[d,mapsto] \\
	& 0
		\ar[rrr, dash]
	&&& 1
\end{tikzcd}
\quad\text{and}\quad
\begin{tikzcd}[column sep=0.2cm]
	0
		\ar[d,mapsto]
		\ar[rr, dash]
	&& 1
		\ar[dr,mapsto]
		\ar[rr, dash, red]
	&& 2
		\ar[dl,mapsto] \\
	0
		\ar[rrr, dash]
	&&& 1
	&
\end{tikzcd}
\end{equation*}
respectively, and where a marked edge is collapsed to a single vertex.

\begin{para}
	In a similar manner, we define the augmented fat Delta category $\fDel_{\ast}$ by restricting the augmented simplex category $\Delta_{\ast}$ to its epimorphisms. 
\end{para}

Alternatively, $\fDel_{\ast}$ can be obtained by formally adding an initial object to $\fDel$ or by taking the category of non-empty relative semiordinals.

\subsection{Functors on \texorpdfstring{$\fDel$}{fat Delta}}
The category $\fDel$ shares deep relations with the simplex category $\Delta$ and the semisimplex category $\Delta_+$. In particular, $\Delta_+$ can be embedded in $\fDel$ in two ways and $\fDel$ has been shown to be a direct replacement of $\Delta$ in \cite{Sattler2017}.

\begin{para}
	The restriction of the domain and codomain functors provides projections denoted by
	\begin{equation*}
		\overline{\pi}\:\fDel \to \Delta_+ \quad \text{and} \quad \fat{\pi}\:\fDel \to \Delta,
	\end{equation*}
respectively.\qedhere
\end{para}

The functor $\fat{\pi}$ is significant as it induces a Dwyer-Kan equivalence between the homotopical categories $(\Delta,\Iso)$ and $(\fDel,\fVert)$, where $\fVert$ is the class of morphisms mapped to identities in $\Delta$ by $\fat{\pi}$ \cite{Sattler2017}.
In addition, $\fat{\pi}$ is an \emph{ambifibration}, which endows it with convenient lifting properties (see \cref{para:tfact}).
The literature on ambifibrations is quite limited; for more details, we refer the reader to the unpublished note \cite{Sattler2017} or J.\ Kock's comment on $n$-Category Caf\'{e} \cite{KockCom}.

We do not make use of the following lemma in this paper, but we think that it is worth observing.

\begin{lem}
    The projection $\overline{\pi}\: \fDel \to \Delta_+$ is a Grothendieck opfibration.
\end{lem}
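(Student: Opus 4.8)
The plan is to verify directly that $\overline\pi$ admits cocartesian lifts, working throughout with the concrete description of $\fDel$ from \cref{sec:epifdel}. The first observation is that $\overline\pi$ is \emph{faithful}: since the left leg of a square \eqref{diag:mofDel} is epic, the bottom map is determined by the top map, so a morphism $\fn\to\fk$ of $\fDel$ is completely determined by its image under $\overline\pi$. Moreover, given epimorphisms $\fn\colon[m]\twoheadrightarrow[n]$ and $\fk\colon[k]\twoheadrightarrow[l]$ and a monomorphism $\overline f\colon[m]\hookrightarrow[k]$, this $\overline f$ is the image of a (necessarily unique) morphism $\fn\to\fk$ if and only if $\fk\overline f$ factors through $\fn$ — equivalently, $\overline f$ carries every marked edge $[i,i+1]$ of $\fn$ onto a (possibly longer) path of marked edges of $\fk$, i.e.\ every edge of $[k]$ between $\overline f(i)$ and $\overline f(i+1)$ is collapsed by $\fk$. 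Thus the whole problem becomes combinatorial, phrased in terms of markings.

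Next, for an object $\fn\colon[m]\twoheadrightarrow[n]$ and a morphism $\overline u\colon[m]\hookrightarrow[M]$ of $\Delta_+$, I would produce the candidate cocartesian lift $\tilde u\colon\fn\to\fn_!$ by taking $\fn_!\colon[M]\twoheadrightarrow[N]$ to be the epimorphism whose marked edges are exactly those edges $[j,j+1]$ with $\overline u(i)\le j<j+1\le\overline u(i+1)$ for some marked edge $[i,i+1]$ of $\fn$. By construction this is the \emph{smallest} marking on $[M]$ for which $\overline u$ underlies a morphism $\fn\to\fn_!$ (it collapses each interval $[\overline u(i),\overline u(i+1)]$ coming from a marked edge of $\fn$, and nothing more), and $\tilde u$ is that morphism, so $\overline\pi(\tilde u)=\overline u$.

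Finally, I would show $\tilde u$ is cocartesian. By faithfulness, given any $v\colon\fn\to\fk$ with $\overline\pi(v)=\overline v$ and any $\overline w\colon[M]\hookrightarrow[k]$ of $\Delta_+$ with $\overline w\,\overline u=\overline v$, the equality $v=w\tilde u$ is automatic once we exhibit \emph{any} morphism $w\colon\fn_!\to\fk$ with $\overline\pi(w)=\overline w$; and by the criterion of the first paragraph such a $w$ exists iff $\overline w$ sends each marked edge of $\fn_!$ onto a marked path of $\fk$. So take a marked edge $[j,j+1]$ of $\fn_!$; by construction it lies in an interval $[\overline u(i),\overline u(i+1)]$ for some marked edge $[i,i+1]$ of $\fn$, whence, $\overline w$ being strictly monotone, $\overline v(i)=\overline w(\overline u(i))\le\overline w(j)<\overline w(j+1)\le\overline w(\overline u(i+1))=\overline v(i+1)$; since $v$ is a morphism, the interval $[\overline v(i),\overline v(i+1)]$ consists of marked edges of $\fk$, hence so does $[\overline w(j),\overline w(j+1)]$. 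Therefore $\tilde u$ is cocartesian, every morphism of $\Delta_+$ out of $\overline\pi(\fn)$ has a cocartesian lift, and $\overline\pi$ is a Grothendieck opfibration. The only genuinely delicate point is the monotonicity bookkeeping with markings in this last step; isolating the minimal lift $\fn_!$ is the key idea, after which cocartesianness is essentially forced. One can moreover check the identity $(\overline w\,\overline u)_!=\overline w_!\circ\overline u_!$ on markings, which shows the opfibration is split, although this is not needed for the statement.
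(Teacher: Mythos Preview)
Your argument is correct. The difference from the paper is one of packaging rather than substance: the paper constructs the cocartesian lift as the pushout of the span $[n]\twoheadleftarrow[m]\hookrightarrow[k]$ in $\Delta$ (invoking \cite[Corollary~3.3]{CFPS2023} for its existence) and then reads off cocartesianness directly from the universal property of that pushout. Your object $\fn_!$ with the ``minimal marking'' is exactly this pushout, described concretely, and your faithfulness observation together with the marking criterion is an unpacking of its universal property. The paper's version is shorter and more categorical but depends on an external lemma; yours is self-contained and makes the combinatorics explicit, which also yields the extra observation that the cleavage is split.
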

\begin{proof}
	Let $\fn\:[m] \twoheadrightarrow [n]$ be an object of $\fDel$ and $\ftop{f}\:[m] \hookrightarrow [k]$. By \autocite[Corollary 3.3]{CFPS2023}, the span $[n] \twoheadleftarrow [m] \hookrightarrow [k]$ has a pushout and hence we get a map $f\:\fn \to \fk$ over $\ftop{f}$ with respect to $\overline{\pi}$:
    \begin{equation*}
    \begin{tikzcd}
      {[m]}
	  	\ar[d, "\fn"', twoheadrightarrow]
		\ar[r, "\ftop{f}", hookrightarrow]
		\ar[dr, "\ulcorner" description, phantom, very near end]
      & {[k]}
	  	\ar[d, "\fk", twoheadrightarrow] \\
      {[n]}
	  	\ar[r, "\fbot{f}"']
      & {[l]}
    \end{tikzcd}
    \end{equation*}
    It remains to show that $f\:\fn \to \fk$ is a $\overline{\pi}$-cocartesian lift of $\ftop{f}$.
    For any $\ftop{g}\:[k] \hookrightarrow [a]$, $\fm\:[a] \twoheadrightarrow [b]$ and $h\:\fn \to \fm$ such that $\ftop{h}=\ftop{g}\ftop{f}$, the universal property of $[l]$ yields a unique morphism $\fbot{g}\:[l] \to [b]$ making the diagrams commute. In particular, we have a unique map $g\:\fk \to \fm$ such that $gf=h$.
\end{proof}

The category $\fDel$ comes equipped with two natural inclusions from $\Delta_+$ \cite{Kock2006}, which we now describe.

\begin{para}
	The functors denoted by
	\begin{equation*}
		(\blank)^{\flat}\:\Delta_+ \hookrightarrow \fDel
		\quad \text{and} \quad
		(\blank)^{\sharp}\:\Delta_+ \hookrightarrow \fDel
	\end{equation*}
	are respectively called the \emph{horizontal} and \emph{vertical inclusions}.
	The horizontal inclusion $(\blank)^{\flat}$ sends an object $[m]$ to the identity morphism $\id_{[m]}$ and a monomorphism $[m] \hookrightarrow [m']$ to the square
	\begin{equation*}
	\begin{tikzcd}
		{[m]}
			\ar[r, hook]
			\ar[d, equal]
		& {[m']}
			\ar[d, equal] \\
		{[m]}
			\ar[r, hook]
		& {[m']}
	\end{tikzcd}
	\end{equation*}
	The vertical inclusion $(\blank)^{\sharp}$ sends an object $[m]$ to the canonical map $[m] \twoheadrightarrow [0]$ and a monomorphism $[m] \hookrightarrow [m']$ to the square
	\belowdisplayskip=-5pt
	\begin{equation*}
	\begin{tikzcd}
		{[m]}
			\ar[r, hook]
			\ar[d, twoheadrightarrow]
		& {[m']}
			\ar[d, twoheadrightarrow] \\
		{[0]}
			\ar[r, equal]
		& {[0]} 
	\end{tikzcd}
	\end{equation*}
\end{para}

The objects in the image of the horizontal (resp.\ vertical) inclusion intuitively correspond to the relative semiordinals with nothing marked (resp.\ everything marked). These two inclusions interact nicely with the projection~$\overline{\pi}$.

\begin{prop}
	The horizontal and vertical inclusions $(\blank)^{\flat}$ and $(\blank)^{\sharp}$ are respectively left and right adjoint to the projection $\overline{\pi}$:
	\begin{equation*}
	\begin{tikzcd}
		\fDel
			\ar[rr, "\overline{\pi}"{name = mid, description}]
		&& \Delta_+
			\ar[ll, "(\blank)^{\flat}"{name = bot, description}, hook, bend left=60]
			\ar[ll, "(\blank)^{\sharp}"{name = top, description}, hook', bend right=60]
		\ar[phantom, from=mid, to=bot, "\dashv" rotate=90]
		\ar[phantom, from=mid, to=top, "\dashv" rotate=90]
	\end{tikzcd}		
	\end{equation*}
\end{prop}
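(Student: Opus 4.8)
The plan is to prove each adjunction directly by exhibiting a natural bijection of hom-sets; the key observation is that a morphism of $\fDel$ out of an object $[m]^{\flat}$, or into an object $[m]^{\sharp}$, is completely determined by its top edge, and that top edge is precisely a morphism of $\Delta_+$.

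First I would treat $(\blank)^{\flat}\dashv\overline{\pi}$. Fix $[m]\in\Delta_+$ and $\fn\:[a]\twoheadrightarrow[b]$ in $\fDel$. A morphism $f\:[m]^{\flat}\to\fn$ is a commutative square in $\Delta$ whose left edge is $\id_{[m]}$, right edge is $\fn$, top edge is a monomorphism $\ftop{f}\:[m]\hookrightarrow[a]$, and bottom edge is $\fbot{f}\:[m]\to[b]$; commutativity forces $\fbot{f}=\fn\circ\ftop{f}$, and conversely every monomorphism $u\:[m]\hookrightarrow[a]$ yields such a square on setting $\fbot{f}:=\fn u$, which is a legitimate bottom edge since it is a composite in $\Delta$. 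Hence $f\mapsto\ftop{f}$ is a bijection $\Hom_{\fDel}([m]^{\flat},\fn)\cong\Hom_{\Delta_+}([m],\overline{\pi}\fn)$. Naturality in $[m]$ and in $\fn$ is immediate, because $\overline{\pi}$ sends a morphism of $\fDel$ to its top edge and $(\blank)^{\flat}$ places a morphism of $\Delta_+$ on the top edge, so the assignment $f\mapsto\ftop{f}$ commutes strictly with composition on either side. Equivalently, one may check the triangle identities for the unit $\id_{[m]}\:[m]\to\overline{\pi}([m]^{\flat})$ and the counit $(\overline{\pi}\fn)^{\flat}\to\fn$ whose square has identity top and left edges and $\fn$ as bottom edge.

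The adjunction $\overline{\pi}\dashv(\blank)^{\sharp}$ is dual. Fix $\fn\:[a]\twoheadrightarrow[b]$ and $[m]\in\Delta_+$. A morphism $f\:\fn\to[m]^{\sharp}$ is a commutative square whose left edge is $\fn$, right edge is the canonical epimorphism $[m]\twoheadrightarrow[0]$, top edge is a monomorphism $\ftop{f}\:[a]\hookrightarrow[m]$, and bottom edge is $\fbot{f}\:[b]\to[0]$; since $[0]$ is terminal in $\Delta$, the bottom edge is the unique such map and commutativity is automatic, so the square is determined by $\ftop{f}$ alone. Therefore $f\mapsto\ftop{f}$ is a bijection $\Hom_{\fDel}(\fn,[m]^{\sharp})\cong\Hom_{\Delta_+}(\overline{\pi}\fn,[m])$, natural for the same formal reason; the unit here is $\fn\to(\overline{\pi}\fn)^{\sharp}$ with identity top edge and the counit is $\id_{[m]}\:\overline{\pi}([m]^{\sharp})\to[m]$.

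I do not expect a real obstacle: the only points requiring any care are checking that the forced bottom edges are genuine morphisms of $\Delta$ (a composite in the first case, the unique map to the terminal object in the second) and writing out naturality, which follows at once from the descriptions of $\overline{\pi}$, $(\blank)^{\flat}$ and $(\blank)^{\sharp}$ on morphisms. This also confirms, on objects, the intuition noted before the statement: the image of $(\blank)^{\flat}$ consists of the relative semiordinals with nothing marked, and that of $(\blank)^{\sharp}$ of those with everything marked.
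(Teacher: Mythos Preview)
Your proof is correct and essentially the same as the paper's: both arguments identify a morphism in $\fDel$ out of $[m]^{\flat}$ (resp.\ into $[m]^{\sharp}$) uniquely with its top edge, using that the bottom edge is forced by commutativity (resp.\ by terminality of $[0]$). The only cosmetic difference is that the paper packages this via explicit unit/counit squares and triangle identities, whereas you lead with the hom-set bijection and then remark on the unit/counit; the content is identical.
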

\begin{proof}
	Let $[k]$ and $\fn\: [m] \twoheadrightarrow [n]$ be objects of $\Delta_+$ and $\fDel$, respectively.
	To prove the adjunction ${(\blank)^{\sharp} \vdash \overline{\pi}}$, we define the counit ${\epsilon_{[k]} \: \overline{\pi}([k]^{\sharp}) \to [k]}$ to be the identity $\id_{[k]}$ and the unit ${\nu_{\fn}\:\fn \to \overline{\pi}(\fn)^{\sharp}}$ to be the left square in \eqref{diag:(co)unit} below.
	Similarly, to prove the adjunction ${\overline{\pi} \vdash (\blank)^{\flat}}$, we define the unit ${\nu'_{[k]}\: [k] \to \overline{\pi}([k]^{\flat})}$ to be the identity $\id_{[k]}$ and the counit ${\epsilon'_{\fn}\: \overline{\pi}(\fn)^{\flat} \to \fn}$ to be the right square in \eqref{diag:(co)unit}.
	\begin{equation}\label{diag:(co)unit}
	\begin{tikzcd}
		{[m]}
			\ar[r, equal]
			\ar[d, "\fn"', twoheadrightarrow]
			\ar[dr, "\nu_{\fn}" description, phantom]
		& {[m]}
		\ar[d, "\overline{\pi}(\fn)^{\sharp}", twoheadrightarrow] \\
		{[n]}
			\ar[r, twoheadrightarrow]
		& {[0]}
	\end{tikzcd}
	\qquad\qquad
	\begin{tikzcd}
		{[m]}
			\ar[r, equal]
			\ar[d, "\overline{\pi}(\fn)^{\flat}"', equal]
			\ar[dr, "\epsilon'_{[m]}" description, phantom]
		& {[m]}
			\ar[d, "\fn", twoheadrightarrow] \\
		{[m]}
			\ar[r, "\fn"', twoheadrightarrow]
		& {[n]}
	\end{tikzcd}
	\end{equation}
	This clearly defines natural transformations, and the triangle are trivially verified as everything reduces to identities.
\end{proof}

\begin{rmk}
	The composite
	\begin{equation*}
		\Delta_+ \xhookrightarrow{(\blank)^{\flat}} \fDel \xtwoheadrightarrow{\fat{\pi}} \Delta
	\end{equation*}
	factors the canonical inclusion $\Delta_+ \hookrightarrow \Delta$.
	It is in this sense that one may regard $\fDel$ as a direct replacement of $\Delta$.
	We refer the reader to \cite{KS2017,Sattler2017} for more about this property.
\end{rmk}

\subsection{Factorisation systems on \texorpdfstring{$\fDel$}{fat Delta}}\label{sec:activeinert}
As shown in \cite{JKPP2025}, the category $\fDel$ possesses an active-inert factorisation system $(\fDel_a,\fDel_0)$.
Although we do not rely on the factorisation system in this paper, the active morphisms will be useful in \cref{sec:genrel} for defining a class of generators (the vertical faces).
This type of factorisation system typically arises in the context of the \emph{Segal conditions} and \emph{nerve functors}, as it makes it possible to express the Segal conditions in terms of a restriction along the right class~\cite{BMW2012}.
In the simplicial case, active and inert morphisms have a simple characterisation.

\begin{para}
	In the category $\Delta$, a morphism $f\: [m] \to [n]$ is said to be \emph{active} if it preserves endpoints, i.e., $f(0)=0$ and $f(m)=n$.
	It is called \emph{inert} if it preserves distances, i.e., $f(i+1)=f(i)+1$.
	We denote active morphisms with a vertical bar "$\vblongto$", while inert morphisms do not have a denotation as they will not be needed in our context.
\end{para}

In the category $\fDel$, the characterisation is similar, with the difference that the maps respect the marking.

\begin{para}
	The class $\fDel_a$ of \emph{active morphisms} is given by maps of the form
	\begin{equation*}
	\begin{tikzcd}[column sep=1.2cm]
		{[m]}
			\ar[r, "\text{active}", mid vert, hookrightarrow]
			\ar[d, twoheadrightarrow]
			\ar[dr, phantom, very near end, "{\ulcorner}"]
		& {[k]}
			\ar[d, twoheadrightarrow] \\
		{[n]}
			\ar[r]
		& {[l]}
	\end{tikzcd}
	\end{equation*}
	where the top arrow is an active monomorphism in $\Delta$ and the square is a pushout.
	The class $\fDel_0$ of \emph{inert morphisms} is given by maps of the form
	\begin{equation*}
	\begin{tikzcd}[column sep=1.2cm]
		{[m]}
			\ar[r, "\text{inert}", hookrightarrow]
			\ar[d, twoheadrightarrow] &
		{[k]}
			\ar[d, twoheadrightarrow] \\
		{[n]}
			\ar[r] &
		{[l]}
	\end{tikzcd}
	\end{equation*}
	where the top arrow is inert in $\Delta$.
\end{para}

\begin{rmk}\label{rmk:mcontract}
	Observe that the bottom map of an active morphism in $\fDel$ is active, since left classes are preserved by pushouts, and is a monomorphism since monomorphisms are preserved by pushouts along epimorphisms in $\Delta$ \cite[Lemma~3.4]{JKPP2025}.
\end{rmk}

The lifting properties of the ambifibration $\fat{\pi}$ allow us to distinguish three notable classes of morphisms in $\fDel$.

\begin{para}\label{para:tfact}
	The classes $\fDiag$, $\fVert$ and $\fHorz$ correspond respectively to $\fat{\pi}$-cocartesian lifts of epimorphisms, $\fat{\pi}$-vertical morphisms and $\fat{\pi}$--cartesian lifts of monomorphisms.
	Explicitly, they are morphisms of the form
	\begin{equation*}
	\begin{tikzcd}
		{[m]}
			\ar[r, equal]
			\ar[d, twoheadrightarrow]
			\ar[dr,"\fDiag" description, phantom] &
		{[k]}
			\ar[d, twoheadrightarrow] \\
		{[n]}
			\ar[r, twoheadrightarrow] &
		{[l]}
	\end{tikzcd}
	\quad\text{and}\quad
	\begin{tikzcd}
		{[m]}
			\ar[r, hook]
			\ar[d, twoheadrightarrow]
			\ar[dr,"\fVert" description, phantom] &
		{[k]}
			\ar[d, twoheadrightarrow] \\
		{[n]}
			\ar[r, equal] &
		{[l]}
	\end{tikzcd}
	\quad \text{and}\quad
	\begin{tikzcd}
		{[m]}
			\ar[r, hook]
			\ar[d, twoheadrightarrow]
			\ar[dr, "\fHorz" description, phantom] &
		{[k]}
			\ar[d, twoheadrightarrow] \\
		{[n]}
			\ar[r, hook] &
		{[l]}
	\end{tikzcd}
	\end{equation*}
	We call them \emph{diagonal}, \emph{vertical} and \emph{horizontal} morphisms, respectively.
\end{para}

The terminology used for the classes $\fDiag$ and $\fHorz$ arises from their correspondence to the diagonal and horizontal arrows, respectively, in the diagrammatic sketch of the category $\fDel$ below.
The terminology vertical is standard in the categorical literature on Grothendieck (op)fibrations, but observe that the morphisms in $\fVert$ are also drawn vertically in the diagram.
\newcommand\x{1mm}
\newcommand\xx{2mm}
\begin{equation*}
\begin{tikzcd}[row sep=1.2cm, column sep=1cm]
		{[0]}
			\ar[rr, shift left=\x]
			\ar[rr, shift left=-\x]
			\ar[d, shift left=\x]
			\ar[d, shift left=-\x]
		&&[-7ex]
		{[1]^{\flat}}
			\ar[rr, shift left=\xx]
			\ar[rr, shift left=0mm]
			\ar[rr, shift left=-\xx]
			\ar[dll]
			\ar[dl, shift left=\x]
			\ar[dl, shift left=-\x]
			\ar[dr, shift left=\x]
			\ar[dr, shift left=-\x]
		&[-7ex] {}&
		{[2]^{\flat}}
			\ar[dlll]
			\ar[dl, bend left=10]
		&[-6ex] \cdots
		\\
		{[1]^{\sharp}}
			\ar[d, shift left=\xx]
			\ar[d, shift left=0mm]
			\ar[d, shift left=-\xx]
			\ar[r, shift left=\x]
		&
		{\sigma^{1}_{0}}
			\ar[dl]
		&&
		{\sigma^{1}_{1}}
			\ar[dlll, bend left=10]
			\ar[from=lll, shift left=0, bend right=20, crossing over]
			\ar[from=ul, shift left=\x, crossing over]
			\ar[from=ul, shift left=-\x, crossing over]
		\\
		{[2]^{\sharp}}
		\\
		[-6ex]
		\vdots
\end{tikzcd}
\end{equation*}
In addition, it follows from the properties of ambifibrations sketched in \cite{KockCom,Sattler2017} that these classes constitute a ternary factorisation system on $\fDel$.
We recover this result in \cref{cor:tfs} as an application of the decomposition of the morphisms in $\fDel$ into faces, up to relations, proved in \cref{sec:genrel}.

\subsection{Sum, relative sum and \texorpdfstring{$\vpro$}{V}-product}\label{sec:nota}
With the goal of simplifying the manipulation of the objects $\fn\:[m] \twoheadrightarrow [n]$ of $\fDel$ in mind, a tool for decomposing them is introduced in this section.
\begin{para}
	For two objects $[n]$ and $[n']$ of $\Delta$, we write $[n] \osum [n']$ for the categorical join of the two posets, and call it the ordinal sum.
As the ordinal sum is functorial, it induces a functor on the category $\fDel$, which is denoted by $\osum$ as well, and we call it the \emph{relative semiordinal sum}.
\end{para}

Intuitively, the relative semiordinal sum of two objects $\fn$ and $\fk$ in $\fDel$ connects the last vertex of $\fn$ to the first vertex of $\fk$ by an unmarked edge.
For instance, in the case of \eqref{diag:lrneut}, we have
\begin{equation*}
	(0 \longrightarrow 1 \mlgar 2) \osum (0 \mlgar 1 \longrightarrow 2) =
	0 \longrightarrow 1 \mlgar 2 \longrightarrow 3 \mlgar 4 \longrightarrow 5.
\end{equation*}

\begin{para}
	The domain of $\fn$ can be decomposed as an ordinal sum of its fibres.
	Write $\ffib{\fn}{i}$ for the natural number $\vert \fn^{-1}(i)\vert-1$, with $0 \leq i \leq n$, and $\ffib{\fn}{i}$ again for the canonical map $[\ffib{\fn}{i}] \twoheadrightarrow [0]$.
	We may thus decompose $\fn$ as follows:
	\begin{equation*}
		\ffib{\fn}{0} \osum \cdots \osum \ffib{\fn}{n}\:[\ffib{\fn}{0}] \osum \cdots \osum [\ffib{\fn}{n}] \twoheadrightarrow [n].\qedhere
	\end{equation*}
\end{para}

With this notation, the interval $[\ffib{\fn}{i}]$ of $\fn$ is marked if and only if $\ffib{\fn}{i}>0$.
Intuitively, $\ffib{\fn}{i}$ counts the number of edges (not vertices) in the fibre of $i$.

\begin{prop}
	The relative ordinal sum $\osum$ induces a strict monoidal structure on the augmented fat Delta category $\fDel_{\ast}$.
\end{prop}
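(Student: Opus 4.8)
The plan is to reduce the claim to the corresponding well-known fact about the ordinal sum on the augmented simplex category $\Delta_{\ast}$, transported through the definition of $\fDel_{\ast}$ as a category of epimorphisms in $\Delta_{\ast}$. Recall that $(\Delta_{\ast}, \osum, [-1])$ is a strict monoidal category, with the empty ordinal $[-1]$ as unit and with strict associativity of the join. First I would check that $\osum$ is well-defined as an operation on $\fDel_{\ast}$: given two epimorphisms $\fn\:[m]\twoheadrightarrow[n]$ and $\fk\:[k]\twoheadrightarrow[l]$, their join $\fn\osum\fk\:[m]\osum[k]\twoheadrightarrow[n]\osum[l]$ is again an epimorphism, since the join of two surjective monotone maps is surjective; and given two morphisms of $\fDel_{\ast}$, i.e.\ commuting squares as in \eqref{diag:mofDel} whose top arrows are monomorphisms, the join of the two squares is again such a square, because the join of two monomorphisms in $\Delta_{\ast}$ is a monomorphism and the join of two commuting squares commutes. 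Functoriality of $\osum$ on $\fDel_{\ast}$ then follows from functoriality of $\osum$ on $\Delta_{\ast}$ applied separately to the top and bottom rows of the defining squares.

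Next I would exhibit the unit. The empty ordinal $[-1]\in\Delta_{\ast}$ carries a unique epimorphism $[-1]\twoheadrightarrow[-1]$, which is the initial object of $\fDel_{\ast}$ (the formally added initial object mentioned in the text). Call it $I$. For any object $\fn$ of $\fDel_{\ast}$ we have $I\osum\fn=\fn=\fn\osum I$ on the nose, since $[-1]$ is a strict unit for $\osum$ on $\Delta_{\ast}$ in both the domain and codomain rows simultaneously, and likewise $I\osum f=f=f\osum I$ for any morphism $f$. Strict associativity $(\fn\osum\fk)\osum\fm=\fn\osum(\fk\osum\fm)$, and the analogous identity on morphisms, is inherited row-by-row from the strict associativity of the join on $\Delta_{\ast}$. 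The unit and pentagon coherence diagrams then hold because all the structure isomorphisms are identities, so there is nothing further to verify beyond the bifunctoriality of $\osum$, which was established in the previous step.

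There is no serious obstacle here; the only point requiring a little care — and the step I would expect to be the ``main obstacle'' insofar as there is one — is the verification that the two defining conditions on a morphism of $\fDel_{\ast}$ (top arrow a monomorphism, vertical arrows epimorphisms) are stable under $\osum$. This is exactly where one uses that $\osum$ preserves monomorphisms and epimorphisms in $\Delta_{\ast}$, which is elementary: a monotone map out of a join is a mono (resp.\ epi) iff its two restrictions are, since the join decomposition is preserved by any monotone map that respects it. Once this is in place, the monoidal axioms are automatic. Alternatively, and perhaps more cleanly, one may observe that $\osum$ on $\Delta_{\ast}$ is a strict monoidal structure, that $(\Arr{(\Delta_{\ast})_{-}})_{\Mono}=\fDel_{\ast}$ is defined by a subfunctor-type condition preserved by $\osum$, and conclude that $\fDel_{\ast}$ inherits the strict monoidal structure as a monoidal subcategory of the arrow category $\Arr{(\Delta_{\ast})}$ equipped with the pointwise join; I would phrase the final write-up whichever of these two ways is shorter given the conventions already fixed in \cref{sec:nota}.
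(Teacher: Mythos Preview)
Your proposal is correct and follows exactly the paper's approach: the paper's proof is the single sentence ``It is not difficult to check that this structure is inherited from the strict monoidal structure on $\Delta_{\ast}$,'' and your write-up simply unpacks that sentence (well-definedness via preservation of monos and epis under $\osum$, the unit $[-1]\twoheadrightarrow[-1]$, and strict associativity/unit laws transported rowwise from $\Delta_{\ast}$). There is no substantive difference in strategy.
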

\begin{proof}
	It is not difficult to check that this structure is inherited from the strict monoidal structure on $\Delta_{\ast}$.
\end{proof}

To distinguish vertices, which is crucial for \cref{sec:genrel}, we introduce the following notation.

\begin{para}\label{rmk:varsigma}
	Let $\ep{}{i}$ denote the sum $\ffib{\fn}{0} + \cdots + \ffib{\fn}{i} + i$ and let $\ip{}{i}$ denote $\ffib{\fn}{0} + \cdots + \ffib{\fn}{i-1} + i$, where the dependency on $\fn$ is implicit.
	In other words, $\ip{}{i}$ is the smallest vertex and $\ep{}{i}$ is the greatest vertex in the fibre of $\fn$ over $i$.
	In particular, we have $\ip{}{0}=0$ and $\ep{}{n}=m$ when $\fn:[m] \to [n]$.
	Finally, given a vertex $0 \leq i \leq m$ of $\fn$, we write $\aint{i}{\fn}$ or simply $\aint{i}{}$ for the associated interval $\ffib{\fn}{\fn(i)}$ in $\fn$.
\end{para}

Recall from~\cite[Definition 2.6]{CFPS2023} that the $\vpro$-product of $[n]$ and $[m]$ is defined by the pushout
\begin{equation*}
\begin{tikzcd}
	{[0]}
		\ar[r, "n"]
		\ar[d, "0"']
		\ar[dr, phantom, very near end, "{\ulcorner}"]
	& {[n]}
		\ar[d] \\
	{[m]}
		\ar[r]
	& {[n] \vpro [m]}
\end{tikzcd}
\end{equation*}
In particular, $[n] \vpro [m] =[n+m]$.

\begin{para}
	As the $\vpro$-product is functorial on active morphisms of $\Delta$, \cite[Definition 2.6]{CFPS2023}, it induces an operation on objects of $\fDel$ (epimorphisms of $\Delta$ are always active), denoted by $\vpro$ as well, and called the \emph{relative $\vpro$\nobreakdash-product}.
	Given two objects $\fn\:[m] \twoheadrightarrow [n]$ and $\fk\:[l] \twoheadrightarrow [k]$ of $\fDel$, the relative semiordinal
	\begin{equation*}
		\fn\vpro\fk\:[m]\vpro[l] \twoheadrightarrow [n]\vpro[k]
	\end{equation*}
	is obtained by gluing $\fk$ to the maximal element of $\fn$.
\end{para}

Intuitively, the relative $\vpro$-product of two objects $\fn$ and $\fk$ in $\fDel$ connects the last vertex of $\fn$ directly to the first vertex of $\fk$.
For instance, in the case of \eqref{diag:lrneut}, we have
\begin{equation*}
	(0 \longrightarrow 1 \mlgar 2) \vpro (0 \mlgar 1 \longrightarrow 2) =
	0 \longrightarrow 1 \mlgar 2 \mlgar 3 \longrightarrow 4.
\end{equation*}

\begin{para}
	It is also possible to combine the ordinal sum and the $\vpro$-product in order to define the \emph{marked semiordinal sum} $\mosum$ of two objects of $\fDel$.
	Given $\fn$ and $\fk$ as before, the marked ordinal sum is the canonical epimorphism
	\begin{equation*}
		\fn \mosum \fk\: [m] \osum [l] \twoheadrightarrow [n] \vpro [k].\qedhere
	\end{equation*}
\end{para}

Intuitively, the marked semiordinal sum is analogous to the relative semiordinal sum, with the difference that the edge linking $\fn$ and $\fk$ is marked.
For instance, in the case of \eqref{diag:lrneut}, we have
\begin{equation*}
	(0 \longrightarrow 1 \mlgar 2) \mosum (0 \mlgar 1 \longrightarrow 2) =
	0 \longrightarrow 1 \mlgar 2 \mlgar 3 \mlgar 4 \longrightarrow 5.
\end{equation*}

\begin{rmk}
	Note that we have $\fn \mosum \fk= \fn \vpro [1]^{\sharp} \vpro \fk$ and $\fn \osum \fk= \fn \vpro [1]^{\flat} \vpro \fk$.
\end{rmk}


\section{Fat Delta by generators and relations}\label{sec:genrel}
In this section, we give a presentation of $\fDel$ in terms of generators and relations, called faces and fat simplicial relations respectively.
We believe it has potential for future applications of $\fDel$, in the same manner that simplicial identities have been a fundamental tool for the usage of simplicial methods in both homotopy and higher category theory.
First, the generators and relations are introduced and described intuitively (\cref{sec:elf,sec:frel}); then, we prove the factorisation and uniqueness up to the relations (\cref{sec:fact,sec:uniq}).
As a consequence, we recover the ternary factorisation system of \cite{KockCom,Sattler2017}.

\subsection{Faces of fat Delta}\label{sec:elf}
As seen in the previous section, the category $\fDel$ shares many similarities with $\Delta$ and $\Delta_+$.
An important feature of $\Delta$ resides in the elementary description of its morphisms in terms of simplicial faces, degeneracies and relations.
The faces and degeneracies form the Reedy factorisation system, which is then lifted to the ternary factorisation system $(\fDiag,\fVert,\fHorz)$ on $\fDel$ using the lifting properties of the ambifibration $\fat{\pi}$ \cite{KockCom,Sattler2017}.
Therefore, it is natural to expect that the relations can be passed on to $\fDel$ via $\fat{\pi}$.

Below we give a concrete definition of the generators of $\fDel$. We observe that they are given by the class of $\fat{\pi}$-cocartesian lifts of degeneracies, the class of $\overline{\pi}$-cocartesian lifts of active faces (or active $\fat{\pi}$-vertical morphisms with a simplicial face at the top), and the class of $\fat{\pi}$-cartesian lifts of faces with a simplicial face at the top.
The generators are given by
\begin{equation*}
\begin{tikzcd}
	{[m]}
		\ar[r, equal]
		\ar[d, "\fn"', twoheadrightarrow]
		\ar[dr,"\dface{i}" description, phantom] &
	{[m]}
		\ar[d, "\dbound{\fn}{i}", twoheadrightarrow] \\
	{[n]}
		\ar[r, "\sigma_i"', twoheadrightarrow] &
	{[n-1]}
\end{tikzcd}
\quad \text{and}\quad
\begin{tikzcd}
	{[m-1]}
		\ar[r, "\face_{i}", mid vert, hook]
		\ar[d, "\vbound{\fn}{i}"', twoheadrightarrow]
		\ar[dr,"\vface{i}" description, "\ulcorner" very near end, phantom] &
	{[m]}
		\ar[d, "\fn", twoheadrightarrow] \\
	{[n]}
		\ar[r, equal] &
	{[n]}
\end{tikzcd}
\quad \text{and}\quad
\begin{tikzcd}
	{[m-1]}
		\ar[r, "\face_{\bv{}{i}}",hook]
		\ar[d, "\sbound{\fn}{i}"', twoheadrightarrow]
		\ar[dr,"\sface{i}" description, phantom] &
	{[m]}
		\ar[d, "\fn", twoheadrightarrow] \\
	{[n-1]}
		\ar[r, "\face_i"',hook] &
	{[n]}
\end{tikzcd}
\end{equation*}
where the maps $\dbound{\fn}{i}$, $\vbound{\fn}{i}$ and $\sbound{\fn}{i}$ are described in detail below using the notation $\ip{}{i}$ and $\ep{}{i}$ from \cref{rmk:varsigma}.
First, we introduce some terminology for vertices that will be helpful for defining and manipulating the faces.

\begin{dfn}
	Let $i$ be a vertex of $\fn$, an object of $\fDel$. We say that $i$ is
	\begin{enumerate}[(i)]
		\item \emph{standard} if its neighbouring edges are unmarked, i.e.\ $\aint{i}{}=0$.
		\item \emph{inner marked} if $i \neq 0, n$ and both neighbouring edges are marked.
		\item \emph{left bordering} (resp. \emph{right bordering}) if $i$ has exactly one marked neighbouring edge, which is the outgoing (resp. incoming) edge.
	\end{enumerate} 
\end{dfn}

These conditions can be reformulated using the notation $\ip{}{i}$ and $\ep{}{i}$ from \cref{rmk:varsigma}.
A vertex $i$ is standard if and only if $\ip{}{\fn(i)}=i=\ep{}{\fn(i)}$, indicating that $i$ is the only vertex in its associated fibre.
This will be useful for faces and relations involving unmarked edges. We will sometimes omit the superscript $0$ or $1$ in that situation and simply write $\bv{}{\fn(i)}$ for instance.
Similarly, $i$ is inner marked if and only if $\ip{}{\fn(i)}\neq i \neq \ep{}{\fn(i)}$, a condition that will be relevant in marked contexts.
Left and right bordering vertices are those that separate standard and inner marked vertices.
They can be characterised as vertices where either $i=\ip{}{\fn(i)}$ for left bordering or $i=\ep{}{\fn(i)}$ for right bordering (but not both).
In instances where the distinction between left and right is not pertinent, these vertices are referred to as bordering vertices.

\begin{para}\label{par:dface}
	The \emph{degenerated faces} $\dface{i}\:\fn \to \dbound{\fn}{i}$ correspond to pairs of identities and simplicial degeneracies $(\id_{[n]},\sigma_i)$:
	\begin{equation*}
	\begin{tikzcd}
		{[m]}
			\ar[r, equal]
			\ar[d, "\fn"', twoheadrightarrow]
			\ar[dr,"\dface{i}" description, phantom] &
		{[m]}
			\ar[d, "\dbound{\fn}{i}", twoheadrightarrow] \\
		{[n]}
			\ar[r, "\sigma_i"', twoheadrightarrow] &
		{[n-1]}
	\end{tikzcd}
	\end{equation*}
	The codomain $\dbound{\fn}{i}=\ffib{\fn}{0} \osum\cdots\osum \ffib{\fn}{i} \mosum \ffib{\fn}{i+1} \osum\cdots\osum \ffib{\fn}{n}$ is obtained from $\fn$ by marking the unmarked edge between the fibres $\ffib{\fn}{i}$ and $\ffib{\fn}{i+1}$, thereby merging them.
	\begin{equation*}
	\begin{tikzcd}[column sep=0.01, row sep=0.4cm]
		0 
		\ar[ddr, mapsto]
		& \cdots
			\ar[dd, "{\scriptstyle\ffib{\fn}{0}}" description, phantom, near start]
		& \ep{}{0}
			\ar[ddl, mapsto]
		&& \ip{}{i}
			\ar[ddrr, mapsto]
		& \cdots
		& \ep{}{i} \ip{}{i+1}
			\ar[dd, "{\scriptstyle\ffib{\fn}{i} \mosum \ffib{\fn}{i+1}}" description, phantom, near start]
		& \cdots
		& \ep{}{i+1}
			\ar[ddll, mapsto]
		&& \ip{}{n}
			\ar[ddr, mapsto]
		& \cdots
			\ar[dd, "{\scriptstyle\ffib{\fn}{n}}" description, phantom, near start]
		& m
			\ar[ddl, mapsto] \\
		&&& \cdots &&&&&& \cdots & \\
			& 0 & &&&& i &&&&& n-1
	\end{tikzcd}
	\end{equation*}	
	More precisely, the epimorphism $\dbound{\fn}{i}\: [m] \xtwoheadrightarrow{} [n-1]$ maps an element $x$ to $\fn(x)$ if $x<\ep{}{i}$ and to $\fn(x)-1$ otherwise.
\end{para}
Degenerated faces are of particular interest as they encode the behaviour of degeneracies weakly.
Simplicial degeneracies usually send an edge to a vertex; here, degenerated faces modify objects of $\fDel$ by turning the $i^{\text{th}}$ unmarked edge into a marked edge.
They are the only type of faces which send unmarked edges to marked edges.

\begin{para}\label{par:vface}
	The \emph{vertical faces} $\vface{i}\: \vbound{\fn}{i} \to \fn$, for $i$ an inner marked vertex, are active maps of $\fDel$ composed of pairs of simplicial (active) faces and identities $(\face_i,\id_{[n]})$:
	\begin{equation*}
	\begin{tikzcd}
		{[m-1]}
			\ar[r, "\face_{i}", mid vert, hook]
			\ar[d, "\vbound{\fn}{i}"', twoheadrightarrow]
			\ar[dr,"\vface{i}" description, "\ulcorner" very near end, phantom] &
		{[m]}
			\ar[d, "\fn", twoheadrightarrow] \\
		{[n]}
			\ar[r, equal] &
		{[n]}
	\end{tikzcd}
	\end{equation*}
	The domain $\vbound{\fn}{i} = \ffib{\fn}{0} \osum\cdots\osum (\aint{i}{}-1) \osum\cdots\osum \ffib{\fn}{n}$ can be described as $\fn$ with the inner marked vertex $i$ removed.
	\begin{equation*}
	\begin{tikzcd}[column sep=0.01, row sep=0.4cm]
		0 
		\ar[ddr, mapsto]
		& \cdots
			\ar[dd, "{\scriptstyle\ffib{\fn}{0}}" description, phantom, near start]
		& \ep{}{0}
			\ar[ddl, mapsto]
		&& \ip{}{\fn(i)}
			\ar[ddrr, mapsto]
		& \cdots
		& \widehat{i}
			\ar[dd, "{\scriptstyle\aint{i}{}-1}" description, phantom, near start]
		& \cdots
		& \ep{}{\fn(i)}-1
			\ar[ddll, mapsto]
		&& \ip{}{n}-1
			\ar[ddr, mapsto]
		& \cdots
			\ar[dd, "{\scriptstyle\ffib{\fn}{n}}" description, phantom, near start]
		& m-1
			\ar[ddl, mapsto] \\
		&&& \cdots &&&&&& \cdots & \\
			& 0 & &&&& \fn(i) &&&&& n
	\end{tikzcd}
	\end{equation*}
	More precisely, the epimorphism $\vbound{\fn}{i}\:[m-1] \xtwoheadrightarrow{} [n]$ maps an element $x$ to $\fn(x)$ if ${x\leq\ep{}{\fn(i)} - 1}$ and $\fn(x-1)$ otherwise.
\end{para}
Vertical faces $\vface{i}$ modify objects of $\fDel$ by adding an inner marked vertex, thereby splitting a marked edge into two.
In particular, when restricted to the marked edges, they can be perceived as the usual simplicial face maps.

\begin{para}\label{par:sface}
	The \emph{standard faces} $\sface{i}\: \sbound{\fn}{i} \to \fn$, for $i$ a standard vertex, correspond to pairs of simplicial faces $(\face_{\bv{}{i}},\face_i)$:
	\begin{equation*}
	\begin{tikzcd}
		{[m-1]}
			\ar[r, "\face_{\bv{}{i}}",hook]
			\ar[d, "\sbound{\fn}{i}"', twoheadrightarrow]
			\ar[dr,"\sface{i}" description, phantom] &
		{[m]}
			\ar[d, "\fn", twoheadrightarrow] \\
		{[n-1]}
			\ar[r, "\face_i"',hook] &
		{[n]}
	\end{tikzcd}
	\end{equation*}
	The domain $\sbound{\fn}{i}=\ffib{\fn}{0} \osum\cdots\osum \widehat{\ffib{\fn}{i}} \osum\cdots\osum \ffib{\fn}{n}$ can be described as $\fn$ but with the standard vertex $\bv{}{i}$ removed.
	\begin{equation*}
	\begin{tikzcd}[column sep=0.01, row sep=0.4cm]
		0 
		\ar[ddr, mapsto]
		& \cdots
			\ar[dd, "{\scriptstyle\ffib{\fn}{0}}" description, phantom, near start]
		& \ep{}{0}
			\ar[ddl, mapsto]
		&& \widehat{\bv{}{i}}
			\ar[dd, "{\scriptstyle\ffib{\fn}{i}}" description, near start, mapsto]
		&& \ip{}{n}-1
			\ar[ddr, mapsto]
		& \cdots
			\ar[dd, "{\scriptstyle\ffib{\fn}{n}}" description, phantom, near start]
		& m-1
			\ar[ddl, mapsto] \\
		&&& \cdots && \cdots & \\
		  & 0 & && \widehat{i} &&& n-1
	\end{tikzcd}
	\end{equation*}
	More precisely, the epimorphism $\sbound{\fn}{i}\:[m-1] \xtwoheadrightarrow{} [n-1]$ maps an element $x$ to $\fn(x)$ if $x\leq\ep{}{i-1}$ and to $\fn(x)-1$ otherwise.
\end{para}
As the name suggests, the standard faces act as ordinary simplicial faces on the unmarked edges, with the additional property of preserving the marked ones.
Notably, the standard faces modify the objects of $\fDel$ by adding a fresh fibre $[0]^{\flat}$ (i.e.\ a standard vertex), thereby splitting an unmarked edge into two.

\begin{rmk}
	Observe that the squares for the standard faces $\sface{i}$ are always both pushouts and pullbacks in $\Delta$. In particular, the condition that $i$ is standard is redundant, but we keep it for clarity.
\end{rmk}

For vertical faces, the condition that the map is active in $\fDel$ ensures that the newly added vertex is inner marked.
In particular, the marked edge is inserted within the fibre and not attached to either of the extremities of the fibre.
The following type of morphisms encodes this operation.
However, it is essential to remark that they are not part of the generators, as they can be expressed as compositions of the degenerated and standard faces.
For these reasons, we have decided to separate them from the vertical faces.
 
\begin{para}\label{par:ovface}
	The \emph{bordering extensions} $\bbface{i}{\epsilon}\: \sbound{\fn}{i+\epsilon} \to \dbound{\fn}{i}$, for $\epsilon=0$ or $1$, correspond to pairs of simplicial faces and identities $(\face_{\bv{}{i+\epsilon}},\id_{[n-1]})$:
	\begin{equation*}
	\begin{tikzcd}[row sep=0.9cm]
		{[m-1]}
			\arrow[r, "\face_{\bv{}{i+\epsilon}}", hook]
			\arrow[d, "\sbound{\fn}{i+\epsilon}"', twoheadrightarrow] 
			\ar[dr,"\sface{i+\epsilon}" description, phantom]
		& {[m]}
			\arrow[r, equal]
			\arrow[d, "\fn" description, twoheadrightarrow]
			\ar[dr,"\dface{i}" description, phantom]
		& {[m]}
			\arrow[d, "\dbound{\fn}{i}", twoheadrightarrow] \\
		{[n-1]}
			\arrow[r, "\face_{i+\epsilon}"', hook]
			\ar[drru, equal, rounded corners, to path={|-([yshift=-0.3cm]\tikztostart.south)-| (\tikztotarget) \tikztonodes}]
		& {[n]}
			\arrow[r, "\sigma_i"', twoheadrightarrow]
		& {[n-1]}
	\end{tikzcd}
	\end{equation*}
	In addition, we say that $\bbface{i}{\epsilon}$ is a \emph{left} (resp.\ \emph{right}) bordering extension if $\epsilon=0$ (resp.\ $\epsilon=1$) and hence $\bv{}{i+\epsilon}$ is a left (resp.\ right) bordering vertex.
	Similarly to vertical faces, bordering extensions create a marked edge with the difference that it is inserted either at the beginning or at the end of the fibre $\ffib{\fn}{i+1-\epsilon}$.
	For instance, in the latter case (i.e.\ $\epsilon=1$) we have
	\belowdisplayskip=-5pt
	\begin{equation*}
	\begin{tikzcd}[column sep=0.01, row sep=0.4cm]
		0 
		\ar[ddr, mapsto]
		& \cdots
			\ar[dd, "{\scriptstyle\ffib{\fn}{0}}" description, phantom, near start]
		& \ep{}{0}
			\ar[ddl, mapsto]
		&& \ip{}{i}
			\ar[ddrr, mapsto]
		& \cdots
			\ar[ddr, "{\scriptstyle\ffib{\fn}{i}}" description, phantom, near start]
		& \ep{}{i}
			\ar[dd, mapsto]
		& \bv{}{i+1}
			\ar[ddl, "{\scriptstyle\ffib{\fn}{i+1}}" description, mapsto, near start]
		&
		& \ip{}{n}
			\ar[ddr, mapsto]
		& \cdots
			\ar[dd, "{\scriptstyle\ffib{\fn}{n}}" description, phantom, near start]
		& m
			\ar[ddl, mapsto] \\
		&&& \cdots &&&&& \cdots & \\
			& 0 & &&&& i &&&& n-1
	\end{tikzcd}
	\end{equation*}
\end{para}

Even though the bordering extensions are not part of the generators, they play a crucial r\^ole in the decomposition of \cref{thm:grdu}.
The bottom composition reduces to an identity, thereby preventing degenerated and standard faces from commuting properly, thus resulting in a special kind of map (see \cref{par:sd}).
In addition, these extensions belong to the class $\fVert$ of vertical morphisms, which plays as an important place in the study of weak identity structures via $\fDel$. In fact, they can be thought as encoding the source and target faces for (weak) degeneracies.
It is therefore useful to understand their interactions with the other faces.

\begin{lem}\label{lem:ofact}
	Any map of the form
	\begin{equation*}
	\begin{tikzcd}[row sep=1.2cm, column sep=1.2cm]
		{[m-1]}
			\ar[r, "\face_{i}", hook]
			\ar[d, "\fn\face_i" description, twoheadrightarrow]
		& {[m]}
			\ar[d, "\fn" description, twoheadrightarrow] \\
		{[n]}
			\ar[r, equal]
		& {[n]}
	\end{tikzcd}
	\end{equation*}
	which is not a vertical face is a bordering extension.
\end{lem}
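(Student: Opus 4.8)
The plan is to determine, from the position of the elementary face $\face_i$ relative to the marking of $\fn$, which of the faces and bordering extensions introduced above the given square coincides with; the argument is a four-way case split on the vertex $i$.

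First I would note that the displayed square is a morphism of $\fDel$ exactly when its left vertical $\fn\face_i$ is an epimorphism, and since $\face_i$ omits only the vertex $i$, this holds precisely when $i$ is not the sole element of the fibre $\fn^{-1}(\fn(i))$, i.e.\ when $i$ is not a standard vertex. Recalling that a vertex $i$ is standard, inner marked, left bordering or right bordering according to which of the equalities $\ip{}{\fn(i)} = i$ and $\ep{}{\fn(i)} = i$ hold (both, neither, only the first, or only the second), this says $i$ is inner marked, left bordering, or right bordering.

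Next I would dispose of the inner marked case: if $i$ is inner marked then $\fn\face_i$ is exactly the epimorphism $\vbound{\fn}{i}$ of \cref{par:vface}, so the square is by definition the vertical face $\vface{i}$, which is ruled out by hypothesis. Hence $i$ is a bordering vertex, and it remains to realise the square as a bordering extension. For $i$ right bordering I would put $j := \fn(i)$ (so $i = \ep{}{j}$ and $\ip{}{j} < i$) and introduce the auxiliary object $\fk\:[m]\twoheadrightarrow[n+1]$ with $\fk(x) = \fn(x)$ for $x < i$ and $\fk(x) = \fn(x)+1$ for $x \ge i$; this is $\fn$ with the top vertex $i$ of the fibre over $j$ split off into its own singleton fibre, so that the standard vertex $\bv{}{j+1}$ of $\fk$ equals $i$. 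I would then verify, straight from the formulas of \cref{par:dface,par:sface}, the identities $\dbound{\fk}{j} = \fn$ (re-merging the split fibre) and $\sbound{\fk}{j+1} = \fn\face_i$ (deleting the standard vertex $i$ of $\fk$), and observe that the top arrow $\face_{\bv{}{j+1}}$ of the bordering extension $\bbface{j}{1}$ of $\fk$ is $\face_i$. Since a morphism of $\fDel$ is determined by its source, target and monomorphic top arrow — the bottom arrow being forced by commutativity along the epic left vertical — this identifies the square with $\bbface{j}{1}$, a bordering extension. The left bordering case is symmetric: one splits off the bottom vertex of the fibre instead, landing on the left bordering extension $\bbface{j}{0}$.

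The conceptual content sits in the first two steps — the case split and the recognition of the vertical face — so I expect the only real work, and the main place to slip, to be the index bookkeeping in the last step: checking the two displayed identities against $\fk$ from the formulas of \cref{par:dface,par:sface}, and separately handling the boundary sub-cases $i = m$ (right bordering) and $i = 0$ (left bordering), where one of the two edges incident to $i$ is absent.
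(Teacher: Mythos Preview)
Your proof is correct, but takes a different route from the paper's. You proceed by direct case analysis on the vertex $i$: after noting that $i$ cannot be standard (else $\fn\face_i$ is not surjective) nor inner marked (else the square is the vertical face $\vface{i}$), you build the intermediate object $\fk$ by hand and verify $\dbound{\fk}{j} = \fn$ and $\sbound{\fk}{j+1} = \fn\face_i$ from the explicit formulas of \cref{par:dface,par:sface}. The paper instead argues categorically: it takes the pushout of the span given by $\fn\face_i$ and $\face_i$, which yields a standard face $\sface{\fk(i)}$ as the left square, and then observes that the induced bottom arrow of the complementary right square is a retraction of $\face_{\fk(i)}$ in $\Delta$ --- hence one of $\sigma_{\fk(i)}$ or $\sigma_{\fk(i)-1}$ --- so the right square is a degenerated face and the composite is $\bbface{\fk(i)-\epsilon}{\epsilon}$. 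The pushout argument is shorter and sidesteps the index bookkeeping you flag in your last paragraph; your approach is more explicit and makes transparent which bordering extension arises (left or right according as $i$ is left or right bordering), a distinction the paper leaves encoded in the choice of retraction.
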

\begin{proof}
	Take the pushout of the span to produce a standard face and use the universal property of pushouts to obtain the following commutative diagram:
	\begin{equation*}
	\begin{tikzcd}[row sep=1.2cm, column sep=1.2cm]
		{[m-1]}
			\arrow[r, "\face_{i}", hook]
			\arrow[d, "\fn\face_i" description, twoheadrightarrow] 
			\ar[dr,"\sface{\fk(i)}" description, "\ulcorner" very near end, phantom]
		& {[m]}
			\arrow[r, equal]
			\arrow[d, "\fk" description, twoheadrightarrow]
			\ar[dr, phantom]
		& {[m]}
			\arrow[d, "\fn" description, twoheadrightarrow] \\
		{[n]}
			\arrow[r, "\face_{\fk(i)}"', hook]
			\ar[drru, equal, rounded corners, to path={|-([yshift=-0.3cm]\tikztostart.south)-| (\tikztotarget) \tikztonodes}]
		& {[n+1]}
			\arrow[r, dotted, twoheadrightarrow]
		& {[n]}
	\end{tikzcd}
	\end{equation*}
	Since the bottom composite is the identity, the dotted arrow is a retraction of $\face_{\fk(i)}$.
	The latter has only two retractions: $\sigma_{\fk(i)}$ and $\sigma_{\fk(i)-1}$.
	Therefore, the right square is the degenerated face $\dface{\fk(i)-\epsilon}$, which give by definition the bordering extension $\bbface{\fk(i)-\epsilon}{\epsilon}$.
\end{proof}

\subsection{Fat simplicial relations}\label{sec:frel}
Since the degenerated, vertical and standard faces depend only on either the simplicial faces or the simplicial degeneracies, it is possible to derive a system of relations analogous to the simplicial identities of $\Delta$.

\begin{para}
	The self-interactions are straightforward and are given by the following relations:
	\belowdisplayskip=-5pt
	\begin{align}
		\dface{j}\dface{i} &= \dface{i}\dface{j+1} \hspace{0.725cm} i\leq j\leq n, \label{rel:dd}\\
		\sface{i}\sface{j} &= \sface{j+1}\sface{i} \hspace{0.725cm} i\leq j\leq n, \label{rel:hh}\\
		\vface{i}\vface{j} &= \vface{j+1}\vface{i} \hspace{0.725cm} i\leq j\leq m. \label{rel:vv}
	\end{align}
\end{para}

Most of the crossed-interactions are more intricate and require some clarification.
For each relation, we begin by explaining the interactions between the faces before giving the relations.

Recall from \cref{par:dface,par:vface,par:sface} that degenerated faces $\dface{i}$ turn unmarked edges into marked ones, thereby merging two fibres.
Standard faces $\sface{i}$ split unmarked edges into two by introducing standard vertices, while vertical faces $\vface{i}$ split marked edges into two by introducing inner marked vertices.
We start with the simplest case: the interaction between standard and vertical faces.

\begin{para}
    The two freshly produced standard and inner marked vertices do not interact with each other as they must be (at least) separated by a bordering vertex.
    In other words, since both top morphisms of $\sface{i}$ and $\vface{j}$ are simplicial faces, the associated simplicial relation applies as well, with the caveat that $\bv{}{i}$ and $j$ cannot be equal.
    In addition, the equality has to be split into two parts to obtain the correct indexation.
	Therefore, we have the following relation:
	\belowdisplayskip=-5pt
	\begin{equation}\label{rel:hv}
		\sface{i}\vface{j} =
		\begin{cases}
			\vface{j+1}\sface{i} &\bv{}{i} < j \leq m \\
			\vface{j}\sface{i}   &j < \bv{}{i}
		\end{cases}
	\end{equation}
\end{para}

\begin{rmk}
	In the second equality of \eqref{rel:hv}, observe that the top simplicial face of the standard face $\sface{i}$ is $\face_{\bv{}{i}-1}$.
\end{rmk}

The interactions between degenerated faces and the other faces are the most intricate.
We first explain the relation involving degenerated and vertical faces.

\begin{para}
    The two faces $\vface{j}$ and $\dface{i}$ can freely commute (thanks to the identity at the top and bottom) as long as the vertical face does not split the edge newly marked by the degenerated face.
    Indeed, if the edge is not marked, then the vertical face cannot insert an inner marked vertex to split it.
    Nonetheless, a relation can still be established for this case: instead of marking an edge and then splitting it, one can first insert a standard vertex with a standard face, which will act as the inner marked vertex, and then mark both of its neighbouring edges with degenerated faces.
    This results in the following relations:
	\begin{equation}\label{rel:dv}
		\vface{j}\dface{i} =
        \begin{cases}
            \dface{i}\bbface{i+1-\epsilon}{\epsilon} & j=\bv{}{i+1} \\
            \dface{i}\vface{j}   & j \neq \bv{}{i+1}
        \end{cases}
	\end{equation}
	for $\epsilon=0$ or $1$.
\end{para}

Finally, for the faces $\sface{i}$ and $\dface{j}$ to commute, one needs to ensure that the freshly marked edge does not interact with the standard vertex $\bv{}{i}$.
Otherwise, the commutativity would be obstructed as it is not possible to merge the standard vertex with another fibre if $\bv{}{i}$ has not been created yet.

\begin{para}\label{par:sd}
    The degenerated face $\dface{j}$ can merge a standard vertex, inserted by $\sface{i}$, to a fibre in two ways: either by the start point or by the end point of a fibre.
	The bordering extensions naturally emerge from this obstruction.
	In particular, as outlined in \cref{par:ovface}, the two cases to avoid for the relation to hold are for $j=i$ and for $j=i-1$.
    Meaning that, using the simplicial relation for simplicial faces and degeneracies, we can reduce the bottom composition to an identity, thereby obtaining a bordering extension.
	At the end, we derive the following relation:
	\begin{align}\label{rel:hd}
		\dface{j}\sface{i} &=
		\begin{cases}
			\sface{i}\dface{j-1} & i<j\leq n \\
			\bbface{j}{\epsilon}	   & i=j+\epsilon \\
			\sface{i-1}\dface{j} & j+1<i \leq n
		\end{cases}
	\end{align}
	for $\epsilon=0$ or $1$.
\end{para}

\begin{rmk}
	We emphasise that the second equality holds by definition \cref{par:ovface}.
	Indeed, this a very special case of the composition of standard and degenerated faces which happens to reduce to a morphism very similar to a vertical face.
\end{rmk}

It is very convenient to consider bordering extensions as a distinct "face", as the obstruction mentioned above occurs naturally and cannot be avoided, as depicted in relation \eqref{rel:dv}.
For this reason, the following collection of relations describing the interactions with the bordering extensions will be convenient.

\begin{prop}\label{prop:ovrel}
    We can derive the following relations about the bordering extensions:
	\vspace{5pt}
	\noindent
	\begin{minipage}{0.42\linewidth}
	\begin{flalign}
		&\bbface{j}{\epsilon}\bbface{i}{1-\epsilon} = \bbface{i}{1-\epsilon}\bbface{j}{\epsilon} & \label{rel:ww1} \\
		&\bbface{j}{\epsilon}\sface{i} =
		\begin{cases}
			\sface{i}\bbface{j}{\epsilon} & i > j \\
			\bbface{j}{1-\epsilon}\sface{i} & i=j \\
			\sface{i}\bbface{j-1}{\epsilon} & i < j
		\end{cases} & \label{rel:wd}
	\end{flalign}
	\end{minipage}
	\hfill
	\noindent
	\begin{minipage}{0.54\linewidth}
	\begin{flalign}
		&\bbface{j}{\epsilon}\bbface{i}{\epsilon} = 
		\begin{cases}
			\bbface{i}{\epsilon}\bbface{j}{\epsilon} & i \neq j \\
			\vface{\bv{}{i+\epsilon}+1-\epsilon}\bbface{i}{\epsilon} & i = j
		\end{cases} & \label{rel:ww2}\\
		&\vface{j}\bbface{i}{\epsilon} =
		\begin{cases}
			\bbface{i}{\epsilon}\vface{j} & j < \bv{}{i+\epsilon}+1-\epsilon \\
			\bbface{i}{\epsilon}\bbface{i}{\epsilon} & j = \bv{}{i+\epsilon}+1-\epsilon \\
			\bbface{i}{\epsilon}\vface{j-1} & j > \bv{}{i+\epsilon}+1-\epsilon
		\end{cases} & \label{rel:vw}
	\end{flalign}
	\end{minipage}
	\vspace{5pt}
	\noindent
	\begin{minipage}{\linewidth}
	\begin{equation}\label{rel:sw}
		\dface{j}\bbface{i}{\epsilon}=
		\begin{cases}
			\bbface{i}{\epsilon}\dface{j} & j > i-1+\epsilon \\
			\vface{\bv{}{i+\epsilon}}\dface{j} = \dface{j}\bbface{i+(-1)^{1-\epsilon}}{1-\epsilon} & j=i-1+\epsilon \\
			\bbface{i-1}{\epsilon}\dface{j} & j < i-1+\epsilon
		\end{cases}
	\end{equation}		
	\end{minipage}
\end{prop}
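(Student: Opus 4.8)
The plan is to derive each of \eqref{rel:ww1}--\eqref{rel:sw} from the defining identity $\bbface{i}{\epsilon} = \dface{i}\sface{i+\epsilon}$ of \cref{par:ovface} together with the relations \eqref{rel:dd}--\eqref{rel:hd} that are already available; several of them (for instance the middle line of \eqref{rel:wd}, or the middle line of \eqref{rel:sw}, which is just \eqref{rel:dv} re-expressed) fall out by a purely formal substitution followed by one or two applications of \eqref{rel:hh}, \eqref{rel:dd} and \eqref{rel:hd}. The uniform justification underlying all of this is that a morphism of $\fDel$ is a commutative square in $\Delta$, so two parallel composites in $\fDel$ coincide as soon as their top edges agree and their bottom edges agree in $\Delta$. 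Since the top component of each of the four kinds of face is a coface or an identity, while the bottom component is an identity (for $\vface{}$ and $\bbface{}{}$), a coface (for $\sface{}$) or a codegeneracy (for $\dface{}$), each equation splits into (i) an identity among composites of cofaces — an instance of $\face_j\face_i = \face_i\face_{j-1}$ for $i<j$, or, in the boundary cases, of $\face_{a+1}\face_a = \face_a\face_a$ — and (ii) an identity on the bottom, which is either trivial or one of the mixed cosimplicial identities relating codegeneracies and cofaces.

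Concretely, I would first read off from \cref{par:dface,par:vface,par:sface,par:ovface} the top/bottom components of all four kinds of face, in particular that $\bbface{i}{\epsilon}$ has top $\face_{\bv{}{i+\epsilon}}$ and identity bottom. Then I would isolate a single auxiliary combinatorial lemma describing how the standard-vertex position $\bv{}{k}$ (equivalently the quantities $\ip{}{i}$, $\ep{}{i}$ of \cref{rmk:varsigma}) transforms when one passes from $\fn$ to $\sbound{\fn}{i}$, to $\dbound{\fn}{i}$, or to $\vbound{\fn}{i}$: on underlying vertices $\dbound{\fn}{i}$ is unchanged, while $\sbound{\fn}{i}$ and $\vbound{\fn}{i}$ delete one vertex, so $\bv{}{k}$ is shifted by $0$ or $1$ according to whether $k$ lies below or above the modified fibre. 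This is immediate from the explicit epimorphisms recorded in \cref{par:dface,par:vface,par:sface}. With this lemma in hand, each of \eqref{rel:ww1}--\eqref{rel:sw} becomes a mechanical check: compose the two squares, record the resulting top and bottom maps, and match them against the right-hand side, using the lemma to reconcile the $\bv{}{}$-indices and the cosimplicial identities to reconcile the cofaces. I would group the cases — \eqref{rel:ww1} and the $i\neq j$ case of \eqref{rel:ww2} are pure coface commutations; \eqref{rel:wd} additionally uses $\sigma_j\face_i = \face_i\sigma_{j-1}$ on the bottom, with $\sigma_j\face_j = \sigma_j\face_{j+1} = \id$ in the equality case; \eqref{rel:sw} similarly, splitting off the subcases according to whether $\dface{j}$ merges the standard vertex freshly created by $\bbface{i}{\epsilon}$.

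The step I expect to be the main obstacle is precisely this index bookkeeping in the boundary cases — $i=j$ in \eqref{rel:ww2}, the middle line of \eqref{rel:vw}, and the middle line of \eqref{rel:sw} — where the composite genuinely changes type: two consecutive \emph{vertical-like} cofaces land at adjacent positions, so $\face_{a+1}\face_a = \face_a\face_a$ collapses the composite into the shape of a vertical face $\vface{\bv{}{i+\epsilon}+1-\epsilon}$ (resp.\ a doubled $\bbface{i}{\epsilon}$), and pinning down the index $\bv{}{i+\epsilon}+1-\epsilon$ exactly requires carefully tracking the size of the modified fibre through the auxiliary lemma. Once that bookkeeping is in place the remaining verifications are routine, and the double equality in the middle line of \eqref{rel:sw} is obtained for free from \eqref{rel:dv}.
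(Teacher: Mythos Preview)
Your approach is correct for establishing the equalities in $\fDel$, but it takes a different route from the paper's, and the difference matters downstream. The paper works purely syntactically: it expands each $\bbface{i}{\epsilon}$ as $\dface{i}\sface{i+\epsilon}$ and then rewrites with \eqref{rel:dd}, \eqref{rel:hh}, \eqref{rel:hd} until the desired form appears (only the case \eqref{rel:ww1} is spelled out in full; the remaining identities are declared analogous). You instead argue semantically --- a morphism of $\fDel$ is a commutative square, hence determined by its top and bottom edges in $\Delta$ --- and reduce everything to cosimplicial identities together with your auxiliary lemma tracking how $\bv{}{k}$ shifts when passing from $\fn$ to $\sbound{\fn}{i}$, $\dbound{\fn}{i}$ or $\vbound{\fn}{i}$. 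This is tidier and makes the boundary-case index bookkeeping more transparent.

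The price is that a semantic equality does not, by itself, exhibit \eqref{rel:ww1}--\eqref{rel:sw} as \emph{derivable from} \eqref{rel:dd}--\eqref{rel:hd} viewed as rewrite rules, and that derivability is precisely what \cref{thm:grdu} relies on: its statement is uniqueness of the factorisation \emph{up to} \eqref{rel:dd}--\eqref{rel:hd}, and its proof shuffles faces around by invoking \eqref{rel:ww1}--\eqref{rel:sw} as intermediate rewrite steps. Your opening sentence does gesture at the syntactic derivation, and that is the route the paper actually follows; the semantic argument you then develop proves the proposition as a collection of equations in $\fDel$, but not in the form the later uniqueness argument needs.
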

\begin{proof}
	Follows from the relations \eqref{rel:dd}--\eqref{rel:hd}.
	We treat the case \eqref{rel:ww1}, the other relations are shown similarly.
	Since bordering extensions are composed of a standard face followed by a degenerated face we have, for $\epsilon=0$ or $1$,
	\begin{equation*}
		\bbface{j}{\epsilon}\bbface{i}{1-\epsilon} =\dface{j}\sface{j+\epsilon}\dface{i}\sface{i+1-\epsilon}.
	\end{equation*}
	Therefore, using the first and second equalities of \eqref{rel:hd} we obtain
	\begin{equation*}
		\bbface{j}{\epsilon}\bbface{i}{1-\epsilon} =\dface{j}\sface{j+\epsilon}\dface{i}\sface{i+1-\epsilon}=
		\begin{cases}
			\dface{j}\dface{i+1}\sface{j+\epsilon}\sface{i+1-\epsilon} & j+\epsilon < i+1 \\
			\dface{j}\dface{i}\sface{j+1+\epsilon}\sface{i+1-\epsilon} & j+\epsilon > i.
		\end{cases}
	\end{equation*}
	To simplify the tracking of the indices, we evaluate $\epsilon$ at $0$ first and then $1$.
	Thus, for $\epsilon=0$ and $j< i+1$ (on the left) and $j>i$ (on the right), we have
	\smallskip

	\noindent
	\begin{minipage}{0.46\linewidth}
	\begin{flalign*}
		\dface{j}\dface{i+1}\sface{j}\sface{i+1} &= \dface{j}\dface{i+1}\sface{i+2}\sface{j} &\text{by \eqref{rel:hh}} \\
		&=\dface{i}\dface{j}\sface{i+2}\sface{j} &\text{by \eqref{rel:dd}} \\
		&=\dface{i}\sface{i+1}\dface{j}\sface{j} &\text{by \eqref{rel:hd}}\\
		&=\bbface{i}{1}\bbface{j}{0}
	\end{flalign*}		
	\end{minipage}
	\hfill
	\noindent
	\begin{minipage}{0.46\linewidth}
	\begin{flalign*}
		\dface{j}\dface{i}\sface{j+1}\sface{i+1} &= \dface{j}\dface{i}\sface{i+1}\sface{j} &\text{by \eqref{rel:hh}} \\
		&=\dface{i}\dface{j+1}\sface{i+1}\sface{j} &\text{by \eqref{rel:dd}} \\
		&=\dface{i}\sface{i+1}\dface{j}\sface{j} &\text{by \eqref{rel:hd}}\\
		&=\bbface{i}{1}\bbface{j}{0}
	\end{flalign*}
	\end{minipage}
	\bigskip

	\noindent
	Then, for $\epsilon=1$ and $j<i$ (on the left) and $j+1>i$ on the right, we have
	\smallskip

	\noindent
	\begin{minipage}{0.46\linewidth}
	\begin{flalign*}
		\dface{j}\dface{i+1}\sface{j+1}\sface{i} &= \dface{j}\dface{i+1}\sface{i+1}\sface{j+1} &\text{by \eqref{rel:hh}} \\
		&=\dface{i}\dface{j}\sface{i+1}\sface{j+1} &\text{by \eqref{rel:dd}} \\
		&=\dface{i}\sface{i}\dface{j}\sface{j+1} &\text{by \eqref{rel:hd}}\\
		&=\bbface{i}{0}\bbface{j}{1}
	\end{flalign*}		
	\end{minipage}
	\hfill
	\bigskip
	\noindent
	\begin{minipage}{0.46\linewidth}
	\begin{flalign*}
		\dface{j}\dface{i}\sface{j+2}\sface{i} &= \dface{j}\dface{i}\sface{i}\sface{j+1} &\text{by \eqref{rel:hh}} \\
		&=\dface{i}\dface{j+1}\sface{i}\sface{j+1} &\text{by \eqref{rel:dd}} \\
		&=\dface{i}\sface{i}\dface{j}\sface{j+1} &\text{by \eqref{rel:hd}}\\
		&=\bbface{i}{0}\bbface{j}{1}
	\end{flalign*}
	\end{minipage}
	The other relations are shown by similar calculation and careful treatment of the indexation.
\end{proof}

\subsection{Factorisation into faces}\label{sec:fact}
In this section, we prove that any morphism in $\fDel$ can be factored as a composition of degenerated, standard and vertical faces (\cref{thm:genreldec}).
The key steps in the proof consist of factoring the bottom arrow of a given morphism $\fn \to \fk$ in $\fDel$ into an epimorphism followed by a monomorphism, and then decomposing the resulting maps into compositions of degenerated, standard and vertical faces.
The epimorphism case is relatively straightforward as it only involves degenerated faces, whereas the monomorphism case is more complex.
To deal with this, we split the latter case and start with a simpler subcase.

\begin{lem}\label{lem:vfact}
	Any vertical morphism $f\:\fn \to \fk$ in $\fDel$, i.e.\ such that the bottom arrow $\fbot{f}$ is an identity as in \eqref{para:tfact}, can be factored as a composition of standard, vertical and degenerated faces as follows:
	\begin{equation}\label{diag:vfact}
	\begin{tikzcd}[row sep=1.2cm, column sep=1.2cm]
		\cdot
			\ar[r, "\ftop{\phi}", hook]
			\ar[d, "\fn" description, twoheadrightarrow]
			\ar[dr, "\phi" description, phantom]
		& \cdot
			\ar[r, "\ftop{\nu}", hook]
			\ar[d, "\fk\ftop{\nu}" description, twoheadrightarrow]
			\ar[dr, "\nu" description, phantom] 
		& \cdot
			\ar[d, "\fk" description, twoheadrightarrow] \\
		\cdot
			\ar[r, equal]
		& \cdot
			\ar[r, equal]
		& \cdot
	\end{tikzcd}
	\end{equation}
	with $\nu=\vface{k_{\gamma}}\cdots\vface{k_{1}}$ and $\phi=\bbface{c_{\theta}}{\epsilon_{\theta}}\cdots\bbface{c_1}{\epsilon_1}$, and where the indices are in increasing order.
\end{lem}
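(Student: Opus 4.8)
The plan is to exploit that a vertical morphism $f\colon\fn\to\fk$ is entirely determined by its top edge $\ftop{f}\colon[m]\hookrightarrow[m']$, and that the identity $\fk\ftop{f}=\fn$ forces $\ftop{f}$ to restrict, for every $0\le i\le n$, to a monotone injection $\iota_i\colon\fn^{-1}(i)\hookrightarrow\fk^{-1}(i)$ of fibres (conversely any such family of fibrewise injections assembles, via ordinal sums, into a vertical morphism). First I would produce the intermediate object $\fk\ftop{\nu}$ explicitly: declare its fibre over $i$ to be the image of $\iota_i$ enlarged by the first and last vertices of $\fk^{-1}(i)$, and let $\ftop{\nu}\colon\fk\ftop{\nu}\hookrightarrow\fk$ be the corresponding inclusion (so that $\fk\ftop{\nu}$ is indeed $\fk$ precomposed with $\ftop{\nu}$). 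By construction $\ftop{\nu}$ omits precisely the vertices of $\fk$ that are inner marked and not in the image of $\ftop{f}$; the remaining factor $\ftop{\phi}$ of $\ftop{f}$ through $\ftop{\nu}$ omits precisely the vertices of $\fk\ftop{\nu}$ that are bordering (necessarily first or last in their fibre) and not in the image of $\ftop{f}$. Then $\ftop{f}=\ftop{\nu}\ftop{\phi}$, whence $f=\nu\phi$ for the induced vertical morphisms $\phi\colon\fn\to\fk\ftop{\nu}$ and $\nu\colon\fk\ftop{\nu}\to\fk$.

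The next step is to recognise $\nu$ and $\phi$ as composites of the claimed faces. I would split each into elementary steps inserting a single vertex, taken in order of increasing position, so that each step has a single simplicial face on top and the identity below, i.e.\ is of the form considered in \cref{lem:ofact}. For $\nu$, the vertex inserted at each stage is inner marked in the current object: an inner-marked vertex of $\fk$ lies strictly between the endpoints of its fibre, which therefore remains a marked fibre in $\fk\ftop{\nu}$ and in every intermediate object, and inserting a vertex into the interior of a marked fibre produces an inner-marked vertex; hence by \cref{par:vface} each such step is a vertical face $\vface{k_j}$. For $\phi$, the vertex inserted at each stage cannot be standard, since every fibre of $\fk\ftop{\nu}$ already contains a vertex in the image of $\ftop{f}$, and it is not inner marked, so it is bordering; in particular the step is not a vertical face, and \cref{lem:ofact} identifies it as a bordering extension $\bbface{c_j}{\epsilon_j}$.

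Finally, for the ``increasing order'' assertion I would invoke the self-interaction relations. Relation \eqref{rel:vv} is the simplicial face--face identity, and the usual normal-form argument reorders the $\vface{k_j}$ into increasing index order. For the $\bbface{c_j}{\epsilon_j}$, relation \eqref{rel:ww1} and the first clause of \eqref{rel:ww2} let one transpose any two bordering extensions with distinct indices, and \eqref{rel:ww1} also lets $\bbface{c}{0}$ and $\bbface{c}{1}$ commute at equal index; since the intermediate object was chosen so that at most one prepend and one append occur in each fibre, the pattern $\bbface{c}{\epsilon}\bbface{c}{\epsilon}$ never arises, so the sorting stays within the class of bordering extensions and never appeals to the second clause of \eqref{rel:ww2}.

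\emph{The main difficulty} is the type bookkeeping underlying the second paragraph: one must verify that, at the exact moment a vertex is inserted, it carries the type (inner marked, resp.\ bordering) needed for \cref{lem:ofact} to return the intended face, and then track how the indices evolve under successive insertions and under the reordering. An equivalent but perhaps cleaner organisation that never names the intermediate object is a double induction on $m'-m$: first peel vertical faces off the left, always deleting the inner-marked vertex of largest position not in the image of $\ftop{f}$ (so the smaller indices are untouched and come out increasing), until every vertex outside the image of $\ftop{f}$ is bordering in $\fk$; then peel bordering extensions off the right, always deleting such a vertex of smallest position. The content is identical, the check being now the persistence of the invariant ``every remaining new vertex is bordering'' after deleting a bordering one.
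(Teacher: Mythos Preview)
Your proposal is correct and follows essentially the same approach as the paper: classify the vertices of $\fk$ not in the image of $\ftop{f}$ as bordering versus inner marked (in $\fk$), factor $\ftop{f}$ accordingly into $\ftop{\nu}\ftop{\phi}$, and then recognise the elementary steps of $\phi$ as bordering extensions via \cref{lem:ofact} and those of $\nu$ as vertical faces. The only cosmetic difference is that the paper takes the canonical increasing-index factorisation of $\ftop{\phi}$ and $\ftop{\nu}$ from the outset, so your final paragraph on reordering via \eqref{rel:vv}, \eqref{rel:ww1}, \eqref{rel:ww2} is not needed there; conversely, your explicit fibrewise description of the intermediate object and your type-check at each intermediate stage make precise what the paper leaves to the reader.
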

\begin{proof}
    The bottom map is an identity, consequently $f$ does not create new standard vertices and only creates inner marked and bordering vertices.
    More precisely, $f$ only interacts with the fibres $\ffib{\fn}{p}$, for $p$ in the codomain of $\fn$, by attaching bordering vertices at the extremities and incrementally inserting inner marked vertices until the desired length $\ffib{\fk}{p}$ is reached.
    One can visualise the action of $f$ restricted to the fibre $\ffib{\fn}{p}$ as follows.
	\begin{equation*}
	\begin{tikzcd}[column sep=0.01,row sep=small]
		q & \cdots & {q'} &{}&{}&{}&{}& \ip{}{p} & \cdots & {f(q)} & \cdots & {f(q')} & \cdots & \ep{}{p} \\
		\cdots && \cdots &&&&& \cdots &&&&&& \cdots \\
		& p &&&&&&&&& p
		\arrow[maps to, from=1-1, to=3-2]
        \arrow[from=1-2, to=3-2, "\ffib{\fn}{p}" description, near start, draw=none]
		\arrow[maps to, from=1-3, to=3-2]
		\arrow[maps to, from=1-8, to=3-11]
		\arrow[from=1-11, to=3-11, "\ffib{\fk}{p}" description, near start, draw=none]
		\arrow[maps to, from=1-10, to=3-11]
		\arrow[maps to, from=1-12, to=3-11]
		\arrow[maps to, from=1-14, to=3-11]
		\arrow["f", from=2-3, to=2-8]
	\end{tikzcd}
	\end{equation*}
    Since the top map $\ftop{f}$ is a monomorphism in $\Delta$ it can be canonically factored as a composition of simplicial faces.
    Therefore, we can factor $\ftop{f}$ by first putting the simplicial faces $\face_{\bv{\epsilon_{\theta}}{c_{\theta}}}\cdots\face_{\bv{\epsilon_{1}}{c_1}}=\ftop{\phi}$ attaching all the bordering vertices that are not in the image of $\ftop{f}$ (such as $\face_{\ip{}{p}}$ and/or $\face_{\ep{}{p}}$), followed by the rest of the faces  $\face_{k_{\gamma}}\cdots\face_{k_1}=\ftop{\nu}$ inserting the inner marked vertices, both in increasing order.
    In particular, we obtain a factorisation of $f$ as in \eqref{diag:vfact}:
	\begin{equation*}
		f=\phi\nu.
	\end{equation*}
	It remains to show that $\phi$ and $\nu$ factor into degenerated, standard and vertical faces.
	For the morphism $\phi$, since each simplicial face $\face_{\bv{\epsilon_p}{c_p}}$, with $1\leq p\leq \theta$, creates a bordering vertex, we can repeatedly apply the construction of \cref{lem:ofact} and obtain a composition of bordering extensions $\phi = \bbface{c_{\theta}}{\epsilon_{\theta}}\cdots\bbface{c_1}{\epsilon_1}$.
    \begin{equation*}
	\begin{tikzcd}[row sep= 1.2cm, column sep=1.5cm]
		\cdots
			\ar[r, hook]
			\ar[dr, "\bbface{c_{p+1}}{\epsilon_{p+1}}" description, phantom]
		& \cdot
			\ar[r, "\face_{\bv{\epsilon_p}{c_p}}", hook]
			\ar[d, twoheadrightarrow]
			\ar[dr, "\sface{c_p+\epsilon_p}" description, "\ulcorner" very near end, phantom]
		& \cdot
			\ar[r, equal]
			\ar[d, twoheadrightarrow]
			\ar[dr, "\dface{c_p}" description, phantom]
		& \cdot
			\ar[d, twoheadrightarrow]
            \ar[r, hook]
		& \cdots \\
		\cdots
			\ar[r, equal]
		& \cdot
			\ar[r, "\face_{c_p+\epsilon_p}"', hook]
			\ar[drru, equal, rounded corners, to path={|-([yshift=-0.3cm]\tikztostart.south)-| (\tikztotarget) \tikztonodes}]
		& \cdot
			\ar[r, "\sigma_{c_p}"', twoheadrightarrow]
		& \cdot
			\ar[r, equal]
		& \cdots
	\end{tikzcd}
	\end{equation*}
	For the second composition, since each simplicial face $\face_{k_p}$, with $1\leq p\leq \gamma$, creates an inner marked vertex within the fibre $\ffib{\fn}{\fk(k_p)}$ to ultimately obtain $\aint{k_p}{\fk}$, they are active morphisms of $\Delta$.
	Taking the pushout along $\face_{k_p}$ yields a vertical face.
	\begin{equation*}
	\begin{tikzcd}[row sep= 1.2cm, column sep=1.2cm]
		\cdots
			\ar[r, hook]
		& \cdot
			\ar[r, "\face_{k_p}", mid vert, hook]
			\ar[d, twoheadrightarrow]
			\ar[dr, phantom, "\vface{k_p}", "\ulcorner" very near end]
		& \cdot
			\ar[d, twoheadrightarrow]
			\ar[r, hook]
		& \cdots \\
		\cdots
			\ar[r, equal]
		& \cdot
			\ar[r, equal]
		& \cdot
			\ar[r, equal]
		& \cdots
	\end{tikzcd}
	\end{equation*}
	Consequently, we obtain a composition of vertical faces $\nu=\vface{k_{\gamma}}\cdots\vface{k_1}$.
\end{proof}

As morphisms in $\fDel$ are represented by commutative squares as in \eqref{diag:mofDel}, when the bottom map is also a monomorphism, the top monomorphism must factor into at least as many faces as those decomposing the bottom arrow.
The surplus of faces factoring the top map can then be rearranged to form a composition of maps with an identity at the bottom.
This where \cref{lem:vfact} becomes useful.
In the following lemma we make this point precise for the case of a monomorphism at the bottom.

\begin{rmk}\label{lem:pbpe}
    In $\Delta$, pullbacks along monomorphisms preserve epimorphisms. Indeed, if
	\begin{equation*}
    \begin{tikzcd}
        {[m]}
            \arrow[r, "\ftop{f}", hook]
            \arrow[d, "\fn"']
            \arrow[dr, phantom, very near start, "{ \lrcorner }"]
        & {[m']}
            \arrow[d, "\fk", twoheadrightarrow] \\
        {[n]}
            \arrow[r, "\fbot{f}"', hook]
        & {[n']}
    \end{tikzcd}
    \end{equation*}
	is a pullback square in $\Delta$, observe that $\fn$ can be decomposed as the fibres of $\fk$ over the image of $\fbot{f}$: $\fn= \ffib{\fk}{\fbot{f}(0)} \osum\cdots\osum \ffib{\fk}{\fbot{f}(n)}$, and hence is an epimorphism.
\end{rmk}

\begin{lem}\label{lem:monofact}
	Any horizontal morphism $f\:\fn \to \fk$ in $\fDel$, i.e.\ such that the bottom arrow $\fbot{f}$ is a monomorphism as in \eqref{para:tfact}, can be factored as a composition of degenerated, standard and vertical faces as follows:
	\begin{equation}\label{diag:monofact}
	\begin{tikzcd}[row sep=1.2cm, column sep=1.2cm]
		\cdot
			\ar[d, "\fn" description, twoheadrightarrow]
			\ar[r, "\ftop{\phi}", hook]
			\ar[dr, "\phi", phantom]		
		& \cdot
			\ar[r, "\ftop{\nu}", hook]
			\ar[d, "\fm\ftop{\nu}" description, twoheadrightarrow]
			\ar[dr, "\nu", phantom]		
		& \cdot
			\ar[r, "\ftop{\delta}", hook]
			\ar[d, "\fm" description, twoheadrightarrow]
			\ar[dr, phantom, "\face"]
		& \cdot
			\ar[d, "\fk\ftop{\tau}\ftop{\psi}" description, twoheadrightarrow]
			\ar[r, "\ftop{\psi}", hook]
			\ar[dr, "\psi", phantom]
		& \cdot
			\ar[d, "\fk\ftop{\tau}" description, twoheadrightarrow]
            \ar[r, "\ftop{\tau}", hook]
            \ar[dr, "\tau", phantom]
        & \cdot
        \ar[d, "\fk" description, twoheadrightarrow] \\
		\cdot
			\ar[r, equal]
		& \cdot
			\ar[r, equal]		
		& \cdot
			\ar[r, "\fbot{\delta}"', hook]
		& \cdot
			\ar[r, equal]
		& \cdot
            \ar[r, equal]
        & \cdot
	\end{tikzcd}
	\end{equation}
	with the decompositions ${\tau=\vface{t_{\iota}}\cdots\vface{t_1}}$, ${\psi=\bbface{i_{\alpha}}{\epsilon_{\alpha}}\cdots\bbface{i_{1}}{\epsilon_1}}$, ${\delta=\sface{j_{\beta}}\cdots\sface{j_{1}}}$, ${\nu=\vface{k_{\gamma}}\cdots\vface{k_{1}}}$ and ${\phi=\bbface{c_{\theta}}{\epsilon_{\theta}}\cdots\bbface{c_1}{\epsilon_1}}$, and where the indices are in increasing order.
\end{lem}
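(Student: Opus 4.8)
The plan is to reduce the statement to \cref{lem:vfact} by means of the pullback of \cref{lem:pbpe}, splitting $f$ into a vertical morphism followed by a morphism lying over $\fbot{f}$, and then decomposing the latter.

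Since $\fbot{f}$ is a monomorphism, I would first take the pullback of $\fk$ along $\fbot{f}$ in $\Delta$; by \cref{lem:pbpe} its left leg is again an epimorphism, so the pullback square defines an object $\fm$ of $\fDel$ together with a morphism $p\:\fm\to\fk$ over $\fbot{f}$ whose top component is a monomorphism. The universal property of the pullback, applied to the square defining $f$, then produces a unique morphism $g\:\fn\to\fm$ whose top component is a (necessarily monic) factor of $\ftop{f}$ and whose bottom component is $\id$; thus $g$ is a vertical morphism and $f$ factors as $g$ followed by $p$. Applying \cref{lem:vfact} to $g$ yields $g=\phi\nu$ with $\phi$ a composition of bordering extensions and $\nu$ a composition of vertical faces, both with indices in increasing order, which realises the two leftmost squares of \eqref{diag:monofact}.

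It remains to decompose $p\:\fm\to\fk$. By the description in \cref{lem:pbpe}, $\fm$ is the ordinal sum of those fibres of $\fk$ lying over the image of $\fbot{f}$, so $\fm$ and $\fk$ differ exactly in that the fibres $\ffib{\fk}{i}$ with $i\notin\operatorname{im}(\fbot{f})$ are missing from $\fm$. For each such $i$, taken in increasing order, I would insert a single standard vertex at the corresponding position; at every stage the freshly inserted vertices are length-$0$ fibres, so each insertion is a standard face, and there are exactly $k-n$ of them, their bottom components being the simplicial faces of the canonical factorisation of $\fbot{f}$. This produces a morphism $\delta=\sface{j_\beta}\cdots\sface{j_1}$ from $\fm$ to an object $\fk'$ of codomain $[k]$ — the object $\fk\ftop{\tau}\ftop{\psi}$ of \eqref{diag:monofact} — which coincides with $\fk$ except that each inserted fibre has length $0$. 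The remaining morphism $\fk'\to\fk$ has identity bottom component and merely fattens these length-$0$ fibres up to their correct lengths, so it is vertical; a second application of \cref{lem:vfact} decomposes it as $\psi\tau$ with $\psi$ a composition of bordering extensions and $\tau$ a composition of vertical faces, again with increasing indices. Composing all five pieces gives \eqref{diag:monofact}.

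The main obstacle is the middle paragraph: one must check that the canonical factorisation of $\ftop{p}$ into simplicial faces splits as ``insert $k-n$ standard vertices, then fatten'', that the first group consists of genuine standard faces whose bottom components reassemble $\fbot{f}$, and that the intermediate object reached is precisely $\fk\ftop{\tau}\ftop{\psi}$. This is where the bookkeeping with $\ip{}{i}$, $\ep{}{i}$ and with the classification of vertices into standard, bordering and inner marked is needed; once $\fk'$ has been correctly identified, \cref{lem:vfact} takes care of everything else.
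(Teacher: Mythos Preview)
Your proposal is correct and follows essentially the same route as the paper: pull back $\fk$ along $\fbot{f}$ to split $f$ into a vertical $g$ followed by a map $p$ over $\fbot{f}$, apply \cref{lem:vfact} to $g$, then split $p$ into a block of standard faces (one per fibre missing from the image of $\fbot{f}$) followed by a second vertical morphism, to which \cref{lem:vfact} is applied again. The paper makes the middle step precise by taking iterated pushouts along the simplicial faces decomposing $\ftop{\delta}$ and invoking the pasting lemma to recover $\fbot{f}$ as the bottom composite, which is exactly the bookkeeping you flag as the main obstacle; apart from that and a minor slip in composition order ($g=\nu\phi$ rather than $\phi\nu$, and likewise $\tau\psi$), the arguments coincide.
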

\begin{proof}
	Let $f\:\fn \to \fk$ be as above.
	To begin, take the pullback of the corresponding cospan and construct the following diagram.
	\begin{equation}\label{diag:pbmono}
	\begin{tikzcd}[row sep=1.2cm, column sep=1.2cm]
		\cdot
			\ar[r, hook, dotted]
			\ar[d, "\fn" description, twoheadrightarrow]
			\ar[urrd, "\ftop{f}" description, hook, rounded corners,
				to path={[pos=0.25]
					-- ([yshift=0.3cm] \tikztostart.north)
					-| (\tikztotarget) \tikztonodes}]
			\ar[dr, "g" description, phantom]
		& \cdot
			\ar[r, hook]
			\ar[d, "\fm" description, twoheadrightarrow]
			\ar[dr, phantom, "h", "{ \lrcorner }" very near start]
		& \cdot
			\ar[d, "\fk", twoheadrightarrow] \\
		\cdot
			\ar[r, equal]
		& \cdot
			\ar[r, "\fbot{f}"', hook]
		& \cdot
	\end{tikzcd}
	\end{equation}
	The arrow $\fm$ is an epimorphism by \cref{lem:pbpe} and, since the dotted arrow is a monomorphism in $\Delta$ (as $\ftop{f}$ is), the left square of \eqref{diag:pbmono} represents a morphism $g\:\fn \to \fm$ in~$\fDel$.
    Therefore, we obtain the factorisation $f=hg$ in $\fDel$.
	Since the bottom map $\fbot{g}$ is an identity, we apply \cref{lem:vfact} to obtain the factorisation \eqref{diag:vfact}:
	\begin{equation*}
		g=\nu\phi=\vface{k_{\gamma}}\cdots\vface{k_{1}}\bbface{c_{\theta}}{\epsilon_{\theta}}\cdots\bbface{c_1}{\epsilon_1}.
	\end{equation*}
	Next, factor $h$ as a composition of a horizontal morphism $\face$ followed by a vertical morphism $\omega$, as described in \cite[Lemma 6.11]{Pao2024},
	\begin{equation*}
	\begin{tikzcd}[row sep=1.2cm, column sep=1.2cm]
		\cdot
			\ar[r, "\ftop{\face}", hook]
			\ar[d, "\fm" description, twoheadrightarrow]
			\ar[dr, "\face", phantom]
		& \cdot
			\ar[r, "\ftop{\omega}", hook]
			\ar[d, "\fk\ftop{\omega}" description, twoheadrightarrow]
			\ar[dr, "\omega", phantom]
		& \cdot
			\ar[d, "\fk" description] \\
		\cdot
			\ar[r, "\fbot{f}"', hook]
		& \cdot
			\ar[r, equal]
		& \cdot
	\end{tikzcd}
	\end{equation*}
    More precisely, using the decomposition of $\fm$ from \cref{lem:pbpe}, it is not difficult to see that $h\: \fm \to \fk$ constructs all the fibres that are not pulled back, i.e.\ that are not in the image of $\fbot{f}$.
    We can split this construction in two steps: first the new fibres are introduced by inserting all standard vertices with $\delta$, then they are filled by $\omega$ to obtain the desired length.
	Since $\ftop{\face}$ is a monomorphism of $\Delta$, it can be canonically factored it as a composition $\face_{\bv{}{j_{\beta}}}\cdots\face_{\bv{}{j_{1}}}$ of simplicial faces in increasing order.
	These faces insert standard vertices which correspond to the first vertices of the fibres that are not in the image of $\fbot{f}$.
	Take the iterated pushouts along the faces to build a composition of standard faces:
	\begin{equation*}
	\begin{tikzcd}[row sep=1.2cm, column sep=1.8cm]
		\cdot
			\ar[r, "\face_{\bv{}{j_1}}", hook]
			\ar[d, "\fm" description, twoheadrightarrow]
			\ar[dr, "\sface{j_1}", "\ulcorner" very near end,  phantom]
		& \cdot
			\ar[r, "\cdots" description, phantom]
			\ar[d, "\sbound{\fk\ftop{\omega}}{j_{\beta}\cdots j_2}" description, twoheadrightarrow]
			\ar[dr, "\ulcorner" description, very near end, phantom]
		& \cdot
			\ar[r, "\face_{\bv{}{j_{\beta}}}", hook]
			\ar[d, "\sbound{\fk\ftop{\omega}}{j_1}" description, twoheadrightarrow]
			\ar[dr, "\sface{j_{\beta}}", "\ulcorner" very near end,  phantom]
		& \cdot
			\ar[d, "\fk\ftop{\omega}" description] \\
		\cdot
			\ar[r, "\face_{j_1}"', hook]
		& \cdot
			\ar[r, "\cdots" description, phantom]
		& \cdot
			\ar[r, "\face_{j_{\beta}}"', hook]
		& \cdot
	\end{tikzcd}
	\end{equation*}
	By the pushout pasting lemma, the outer square is a pushout, and hence the bottom composition is the factorisation of $\fbot{f}$ into simplicial faces.
    We obtain the factorisation $\delta=\sface{j_{\beta}}\cdots\sface{j_{1}}$, where the indices are in increasing order.
    Finally, we use \cref{lem:monofact} to factor $\omega$ into $\psi\tau$, where $\psi=\bbface{i_{\alpha}}{\epsilon_{\alpha}}\cdots\bbface{i_1}{\epsilon_1}$ and $\tau=\vface{t_{\iota}}\cdots\vface{t_1}$.
\end{proof}

We are now ready to construct the full factorisation of an arbitrary morphism in $\fDel$.

\begin{thm}\label{thm:genreldec}
    Any map $f\:\fn \to \fk$ in $\fDel$ as in \eqref{diag:mofDel} factors as a composition of degenerated, vertical and standard faces, as follows:
    \begin{equation}\label{diag:genreldec}
	\begin{tikzcd}[row sep=1.2cm, column sep=1.2cm]
        \cdot
            \ar[r, equal]
            \ar[d, "\fn"' description, twoheadrightarrow]
            \ar[dr, "\sigma", phantom]
		& \cdot
			\ar[d, "\fbot{\sigma}\fn" description, twoheadrightarrow]
			\ar[r, "\ftop{\phi}", hook]
			\ar[dr, "\phi", phantom]		
		& \cdot
			\ar[r, "\ftop{\nu}", hook]
			\ar[d, "\fm\ftop{\nu}" description, twoheadrightarrow]
			\ar[dr, "\nu", phantom]		
		& \cdot
			\ar[r, "\ftop{\delta}", hook]
			\ar[d, "\fm" description, twoheadrightarrow]
			\ar[dr, phantom, "\face"]
		& \cdot
			\ar[d, "\fk\ftop{\tau}\ftop{\psi}" description, twoheadrightarrow]
			\ar[r, "\ftop{\psi}", hook]
			\ar[dr, "\psi", phantom]
		& \cdot
			\ar[d, "\fk\ftop{\tau}" description, twoheadrightarrow]
            \ar[r, "\ftop{\tau}", hook]
            \ar[dr, "\tau", phantom]
        & \cdot
        \ar[d, "\fk" description, twoheadrightarrow] \\
        \cdot
            \ar[r, "\fbot{\sigma}"', twoheadrightarrow]
		& \cdot
			\ar[r, equal]
		& \cdot
			\ar[r, equal]		
		& \cdot
			\ar[r, "\fbot{\delta}"', hook]
		& \cdot
			\ar[r, equal]
		& \cdot
            \ar[r, equal]
        & \cdot
	\end{tikzcd}
	\end{equation}
    with the decompositions ${\tau=\vface{t_{\iota}}\cdots\vface{t_1}}$, ${\psi=\bbface{i_{\alpha}}{\epsilon_{\alpha}}\cdots\bbface{i_{1}}{\epsilon_1}}$, ${\delta=\sface{j_{\beta}}\cdots\sface{j_{1}}}$, ${\nu=\vface{k_{\gamma}}\cdots\vface{k_{1}}}$, ${\phi=\bbface{c_{\theta}}{\epsilon_{\theta}}\cdots\bbface{c_1}{\epsilon_1}}$ and $\sigma=\dface{l_{\lambda}}\cdots\dface{l_{1}}$, and where the indices are in increasing order.
\end{thm}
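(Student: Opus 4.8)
The plan is to peel off the degenerate part of $f$ and reduce what remains to \cref{lem:monofact}. Let $f\:\fn \to \fk$ be a morphism of $\fDel$ as in \eqref{diag:mofDel}, and take the epi--mono factorisation $\fbot f = m \circ e$ of its bottom arrow in $\Delta$, with $e$ an epimorphism and $m$ a monomorphism. Precomposing $e$ with $\fn$ yields an object $e\fn$ of $\fDel$, and one checks directly that the square with identity top, legs $\fn$ and $e\fn$, and bottom $e$ is a morphism $\sigma\:\fn \to e\fn$ of $\fDel$, while the square with top $\ftop f$, legs $e\fn$ and $\fk$, and bottom $m$ is a morphism $h\:e\fn \to \fk$; they satisfy $f = h\sigma$. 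Here $\sigma$ has an identity as its top arrow and $h$ has a monomorphism as its bottom arrow, so $h$ is a horizontal morphism.

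First I would decompose $\sigma$. Since $e$ is an epimorphism of $\Delta$, it is a composite of simplicial degeneracies; by \cref{par:dface} each such degeneracy paired with an identity on top is the commutative square underlying a degenerated face, so iterating produces a composite $\dface{l_\lambda}\cdots\dface{l_1}$ whose underlying square again has identity top and bottom $e$. As a morphism of $\fDel$ is its underlying commutative square, this composite equals $\sigma$; and applying relation \eqref{rel:dd} one arranges the indices in increasing order, as in the statement. Next I would apply \cref{lem:monofact} verbatim to the horizontal morphism $h$, obtaining $h = \tau\psi\face\nu\phi$ with $\tau=\vface{t_\iota}\cdots\vface{t_1}$, $\psi=\bbface{i_\alpha}{\epsilon_\alpha}\cdots\bbface{i_1}{\epsilon_1}$, $\face=\sface{j_\beta}\cdots\sface{j_1}$, $\nu=\vface{k_\gamma}\cdots\vface{k_1}$ and $\phi=\bbface{c_\theta}{\epsilon_\theta}\cdots\bbface{c_1}{\epsilon_1}$, all with increasing indices. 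Pasting the square for $\sigma$ onto the left of the diagram supplied by \cref{lem:monofact} then gives precisely \eqref{diag:genreldec} together with $f = \tau\psi\face\nu\phi\sigma$; and since each bordering extension is, by \cref{par:ovface}, the composite $\bbface{i}{\epsilon}=\dface{i}\sface{i+\epsilon}$ of a standard face followed by a degenerated face, this exhibits $f$ as a composite of degenerated, vertical and standard faces.

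There is no genuinely hard step here: all the substance of the factorisation --- the pushout and pullback constructions and the unavoidable appearance of the bordering extensions --- has been absorbed into \cref{lem:vfact,lem:monofact}, and the present argument is an assembly step. The two points that require a little care are verifying that the squares produced by the epi--mono split are genuine morphisms of $\fDel$ (immediate, since the first has an identity top arrow and monomorphisms and epimorphisms in $\Delta$ are stable under the compositions involved, cf.\ \cref{lem:pbpe}), and matching the indexing conventions of \cref{par:dface} so that the degeneracies constituting $e$ correspond to degenerated faces whose indices can be listed in increasing order via \eqref{rel:dd}.
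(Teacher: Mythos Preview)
Your argument is correct and follows essentially the same route as the paper: factor the bottom arrow of $f$ as an epimorphism followed by a monomorphism, decompose the resulting diagonal part $\sigma$ into degenerated faces, and apply \cref{lem:monofact} to the remaining horizontal morphism. The only cosmetic difference is that the paper appeals to the canonical increasing-index decomposition of an epimorphism in $\Delta$ directly rather than invoking \eqref{rel:dd} to reorder.
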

\begin{proof}
    First, factor $\fbot{f}$ uniquely as an epimorphism $\fbot{\sigma}$ followed by a monomorphism $\fbot{\face}$ in $\Delta$ to obtain
    \begin{equation*}
    \begin{tikzcd}[row sep=1.2cm, column sep=1.2cm]
        \cdot
            \ar[r, equal]
            \ar[d, "\fn" description, twoheadrightarrow]
            \ar[dr, "\sigma" description, phantom]
        & \cdot
            \ar[r, "\ftop{f}", hook]
            \ar[d, "\fbot{\sigma}\fn" description, twoheadrightarrow]
        & \cdot
            \ar[d, "\fk" description, twoheadrightarrow] \\
		\cdot
            \ar[r, "\fbot{\sigma}"', twoheadrightarrow]
        & \cdot
            \ar[r, "\fbot{\delta}"', hook]
        & \cdot
    \end{tikzcd}
    \end{equation*}
    Then apply \cref{lem:monofact} to decompose the right square into standard, degenerated and vertical faces to obtain the factorisation as in \eqref{diag:genreldec} in $\fDel$.
    The epimorphism $\fbot{\sigma}$ can be canonically factored as a composition of simplicial degeneracies $\sigma_{l_{\lambda}}\cdots\sigma_{l_{1}}$ in increasing order. Using that, we can factor $\sigma$ as a composition of degenerated faces $\dface{l_{\lambda}}\cdots\dface{l_{1}}$ without difficulties:
    \begin{equation*}
    \begin{tikzcd}[row sep=1.2cm, column sep=1.8cm]
        \cdot 
            \ar[r, equal]
            \ar[d, "\fn" description, twoheadrightarrow]
            \ar[dr, "\dface{l_1}",  phantom]
        & \cdot
            \ar[r, "\cdots" description, phantom]
            \ar[d, "\dbound{\fn}{l_1}" description, twoheadrightarrow]
        & \cdot
            \ar[r, equal]
            \ar[d, "\dbound{\fn}{l_{\lambda-1}\cdots l_1}" description, twoheadrightarrow]
            \ar[dr, "\dface{l_{\lambda}}",  phantom]
        & \cdot
            \ar[d, "\fbot{\sigma}\fn" description] \\
        \cdot
            \ar[r, "\sigma_{l_1}"', twoheadrightarrow]
        & \cdot
            \ar[r, "\cdots" description, phantom]
        & \cdot
            \ar[r, "\sigma_{l_{\lambda}}"', twoheadrightarrow]
        & \cdot
    \end{tikzcd}
    \end{equation*}
    In conclusion, we have the following factorisation
	\belowdisplayskip=-5pt
    \begin{equation}\label{eq:dvhdec}
    f=\vface{t_{\iota}}\cdots\vface{t_1}\bbface{i_{\alpha}}{\epsilon_{\alpha}}\cdots\bbface{i_{1}}{\epsilon_1}\sface{j_{\beta}}\cdots\sface{j_{1}}\vface{k_{\gamma}}\cdots\vface{k_{1}}\bbface{c_{\theta}}{\epsilon_{\theta}}\cdots\bbface{c_1}{\epsilon_1}\dface{l_{\lambda}}\cdots\dface{l_{1}}	
    \end{equation}
\end{proof}

\subsection{Uniqueness of the factorisation}\label{sec:uniq}
We now prove that the factorisation of \cref{thm:genreldec} is unique up to the relations \eqref{rel:dd}--\eqref{rel:hd}.
Our strategy to show the uniqueness is always the same: it is proven by induction on the number of degenerated, standard and vertical faces in the factorisations.

\begin{para}
	In the rest of this section, we will use the variable $p$ to represent the index of the faces which need to be relocated in the factorisations.
	When commuting the faces, some of the relations \eqref{rel:dd}--\eqref{rel:sw} change the indices.
	As a result, the index $p$ might change as the face is moved through the compositions of other faces appearing in the factorisation of \cref{lem:monofact,lem:vfact,thm:genreldec}.
	For example, consider the composition $\dface{p}\dface{k_n}\cdots\dface{k_1}$, where the indices $k_1$, \ldots, $k_n$ are in increasing order.
	There exists an $n_0$ such that $k_{n_0+1} \geq p \geq k_{n_0}$.
	Using the relation \eqref{rel:dd}, we can commute $\dface{p}$ through the composition of degenerated faces to obtain $\dface{k_n+1}\cdots\dface{k_{n_0}+1}\dface{p}\dface{k_{n_0}}\cdots\dface{k_1}$, and then commute it further to get $\dface{k_n+1}\cdots\dface{k_{n_0}+1}\dface{k_{n_0}}\cdots\dface{k_1}\dface{p-n_0}$.
	When handling similar cases repeatedly, the new indices can become cumbersome to track and carry around.
	Furthermore, depending on the situation, different cases may yield different new indices.
	To simplify notation, we will denote the new index as $\dvp$, treating $p$ and $\dvp$ as dynamic variables.
	This approach allows us to focus on the proof's structure without being encumbered by the exact expressions of the indices.
\end{para}

In order to break the proof in simpler subcases, we first show that each factorisation of \cref{lem:vfact,lem:monofact} for vertical and horizontal morphisms is unique up to the relations.

\begin{lem}\label{lem:reloc}
	Suppose a map $f\:\fn \to \fk$ in $\fDel$ decomposes into a single kind of face, i.e.\ $f=f_{k_{n}}\cdots f_{k_1}$ where all $f_p$'s are either $\dface{p}$, $\sface{p}$, $\vface{p}$ or $\bbface{p}{\epsilon_p}$.
	Then, the factorisation can be rearranged into a composition such that the indices are in increasing order.
\end{lem}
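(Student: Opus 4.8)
The plan is to prove this by a sorting argument, inducting on the number $n$ of faces; the cases $n\le 1$ are trivial, so assume $n\ge 2$ and treat the four types of face in turn. Consider first a word $\dface{k_n}\cdots\dface{k_1}$ of degenerated faces. Relation \eqref{rel:dd} says that an adjacent pair of degenerated faces whose two indices are in the ``wrong'' relative order may be swapped, one index being shifted by one and the other left unchanged --- this is exactly the rewriting underlying the normal form of a composite of degeneracies in $\Delta$. I would single out an occurrence of the extremal index and transport the corresponding degenerated face to the position it must occupy in the sorted word, by successive applications of \eqref{rel:dd}: because that index is extremal it is never altered, while each face the migrating one passes merely has its index shifted by one. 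The remaining $n-1$ faces then form a shorter word of the same kind, to which the induction hypothesis applies, and restoring the singled-out face at its slot yields a word with all indices in order. The words $\sface{k_n}\cdots\sface{k_1}$ and $\vface{k_n}\cdots\vface{k_1}$ are handled in exactly the same way using \eqref{rel:hh} and \eqref{rel:vv}, which have the identical shape.

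For a word $\bbface{k_n}{\epsilon_n}\cdots\bbface{k_1}{\epsilon_1}$ of bordering extensions the argument is even simpler. By \eqref{rel:ww1} and the first clause of \eqref{rel:ww2}, any two bordering extensions $\bbface{i}{\epsilon}$ and $\bbface{j}{\epsilon'}$ with $(i,\epsilon)\ne(j,\epsilon')$ commute past one another with no change of indices at all; the only exception --- an adjacent pair $\bbface{i}{\epsilon}\bbface{i}{\epsilon}$ with equal index and equal marking --- would, by the second clause of \eqref{rel:ww2}, produce a vertical face rather than a composite of bordering extensions, but such a pair cannot occur in a word arising from the factorisations of \cref{lem:vfact,lem:monofact}, since the two extensions would then have to insert a marked vertex at the same border of the same fibre. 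Hence the bordering extensions of such a word may be reordered arbitrarily, in particular into increasing order of index.

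The substance of the argument is the bookkeeping of how the indices shift as a face is transported through the composite --- precisely the reason for the dynamic-index notation $\dvp$ introduced above --- together with the observation that the shift always points in the direction that makes the procedure terminate: the migrating face never returns to a position it has left, since it carries the extremal index among the faces not yet placed and that index moves monotonically, so each pass of the outer induction places one further face and the process stops after $n$ passes. I would write the degenerated case out in full, recording the shift once with the $\dvp$ notation, and then remark that \eqref{rel:hh} and \eqref{rel:vv} are formally identical and that the bordering-extension case needs no shift. This lemma is the ingredient that lets the uniqueness proof in \cref{sec:uniq} reduce, one face type at a time, to the comparison of two already-sorted words.
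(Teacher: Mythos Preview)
Your approach is essentially the same as the paper's: both argue by induction on the length of the word and use the commutation relations \eqref{rel:dd}, \eqref{rel:hh}, \eqref{rel:vv}, \eqref{rel:ww1}, \eqref{rel:ww2} to migrate a single face into position. The paper phrases it as ``sort the first $n$ faces by hypothesis, then insert the last one'', while you phrase it as ``extract an extremal face, move it to its slot, recurse'' --- these are the insertion-sort and selection-sort variants of the same idea, and the bookkeeping with the dynamic index $\dvp$ is identical in spirit.

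There is one genuine slip in your treatment of bordering extensions. You correctly notice that the second clause of \eqref{rel:ww2} turns an adjacent pair $\bbface{i}{\epsilon}\bbface{i}{\epsilon}$ into $\vface{\cdot}\bbface{i}{\epsilon}$, but you deal with it by asserting that such a pair ``cannot occur in a word arising from the factorisations of \cref{lem:vfact,lem:monofact}''. This is an extra hypothesis not present in the lemma's statement, so as written your argument does not prove the lemma in the generality claimed. The fix is much simpler than the restriction you impose: an adjacent pair with equal index and equal $\epsilon$ is already in (non-strict) increasing order, so the sorting procedure never needs to swap it. The only swaps required are between pairs with distinct $(i,\epsilon)$, and those are covered by \eqref{rel:ww1} and the first clause of \eqref{rel:ww2}. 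Replace your ``cannot occur'' clause with this observation and the argument goes through for arbitrary words of bordering extensions, matching the lemma as stated.
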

\begin{proof}
	We proceed by induction on the number $n$ of faces $f_{k_n}, \ldots, f_{k_1}$ in the decomposition of $f$. The argument is the same for all cases, so we present it in general.
	The base case $n=0$ is trivial.
	Suppose any factorisation of length $n$ and into a single kind of face can be rearranged in an increasing order and assume $f$ can be decomposed into $n+1$ faces as follows:
	\begin{equation*}
		f=f_pf_{k_{n}}\cdots f_{k_1}.
	\end{equation*}
	By hypothesis, the composition $f_{k_n}\cdots f_{k_1}$ can be rearranged into a composition of the form $f_{k'_{n}}\cdots f_{k'_1}$ where the indices $k'_1$, \ldots, $k'_n$ are in increasing order.
	If $p \geq k'_n$, then we are done.
	Otherwise, there is a $1 \leq n_0 < n$ such that $k'_{n_0+1} \geq \dvp \geq k'_{n_0}$.
	We use the appropriate relation (\eqref{rel:dd} for degenerated faces, \eqref{rel:hh} for standard faces, \eqref{rel:vv} for vertical faces, or \eqref{rel:ww1} and \eqref{rel:ww2} for bordering extensions) to move $f_p$ to its correct position in the factorisation, ensuring the indices are in increasing order.
\end{proof}

\begin{lem}\label{lem:vfactuni}
    The factorisation of \cref{lem:vfact} is unique up to the relations \eqref{rel:dd}--\eqref{rel:hd}.
\end{lem}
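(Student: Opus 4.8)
The plan is to show that any factorisation of a vertical morphism $f\:\fn\to\fk$ into degenerated, standard and vertical faces can be transported, using only relations \eqref{rel:dd}--\eqref{rel:hd} together with their consequences collected in \cref{prop:ovrel}, into the specific factorisation $f=\nu\phi$ produced by \cref{lem:vfact}. Since that factorisation has a rigid shape — a block of bordering extensions with increasing indices followed by a block of vertical faces with increasing indices — this establishes uniqueness up to the relations. I would argue by induction on the total number $N$ of faces occurring in the given factorisation, the case $N=0$ being trivial, and organise the inductive step as a terminating rewriting of the factorisation towards the canonical one.

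\emph{Step 1 (forming the bordering extensions).} Applying the codomain projection $\fat\pi$ to the factorisation turns it into a word in the simplicial maps $\sigma_i$ (from each $\dface{i}$), $\face_i$ (from each $\sface{i}$) and identities (from the $\vface{\cdot}$'s); since $f$ is vertical this word equals $\id$ in $\Delta$, which in particular forces the number of degenerated faces to equal the number of standard faces. I would then use \eqref{rel:hd}, \eqref{rel:dv} and \eqref{rel:hv} to slide faces past one another until every degenerated face $\dface{i}$ is immediately followed, on its right, by the standard face $\sface{i+\epsilon}$ against which its degeneracy cancels, at which point $\dface{i}\sface{i+\epsilon}$ is by definition a bordering extension $\bbface{i}{\epsilon}$ (\cref{par:ovface}); which standard face pairs with which degenerated face is dictated by the reduction of the $\fat\pi$-image word to the identity in $\Delta$, and any standard face not yet paired is absorbed into a bordering extension by \cref{lem:ofact}. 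The result is a factorisation of $f$ involving only vertical faces and bordering extensions.

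\emph{Step 2 (separating the blocks and sorting).} Now I would push every bordering extension to the right past the vertical faces using \eqref{rel:vw}; its only delicate case, $\vface{j}\bbface{i}{\epsilon}=\bbface{i}{\epsilon}\bbface{i}{\epsilon}$, trades a vertical face for a bordering extension. Then I would sort the resulting block of bordering extensions into increasing index order and the block of vertical faces likewise, applying \cref{lem:reloc} (which invokes \eqref{rel:ww1}, \eqref{rel:ww2} and \eqref{rel:vv}), the middle case of \eqref{rel:ww2} again trading a vertical face for a bordering extension. Choosing the lexicographic termination measure given by the pair (number of vertical faces, number of index inversions), each such rewriting step decreases the measure, so the process halts at a factorisation $f=\nu'\phi'$ of exactly the shape of \cref{lem:vfact}.

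\emph{Step 3 (matching the normal forms), and the main obstacle.} It remains to see that $\nu'\phi'$ and the canonical $\nu\phi$ are the same word. Applying the domain projection $\overline\pi$ sends both to factorisations of the monomorphism $\ftop{f}$ into simplicial faces, and such a factorisation is unique; this determines the sequence of inserted vertices. For each inserted vertex, whether it is inner marked or bordering in the terminal object $\fk$ depends only on $f$, and the recipe of \cref{lem:vfact} places the bordering ones, as bordering extensions, to the right of the inner-marked ones, as vertical faces, with indices increasing inside each block, so the two words agree. The main obstacle is Step 1 together with the termination claimed in Step 2: the relevant relations are heavily case-split and reindex the faces, and several of them destroy a vertical face while creating a bordering extension, so one has to commit to a termination measure and verify its monotonicity in every branch; the index bookkeeping is kept under control by the dynamic-index convention introduced before \cref{lem:reloc} and by following everything simultaneously through the $\fat\pi$- and $\overline\pi$-images, where uniqueness of factorisation in $\Delta$ does the real work.
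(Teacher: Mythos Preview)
Your global-rewriting approach is genuinely different from the paper's, which argues by induction on the number $c$ of faces: one peels off a single outermost face $g$ from $f=gf'$ (or $f=f'g$), applies the inductive hypothesis to the length-$c$ remainder $f'$, and then uses the relations to reinsert $g$ into the already-normalised word. A structural point you miss is that when $g=\dface{p}$, the remainder $f'$ is \emph{not} vertical---its bottom map is a section $\face_{p}$ or $\face_{p+1}$ of $\sigma_p$---and the paper factors it via \cref{lem:monofact}, which produces exactly one standard face $\sface{p+\epsilon}$ for $\dface{p}$ to meet and form the required bordering extension. This replaces your Step~1 pairing procedure entirely: there is no need to identify globally which standard face matches which degenerated face.

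There are two concrete gaps in your sketch. First, the invocation of \cref{lem:ofact} in Step~1 is misplaced: that lemma classifies single squares with a simplicial face on top and an identity on the bottom, and gives no mechanism for ``absorbing'' a leftover standard face inside a composition; a standard face has $\face_i$ at the bottom and is never itself a bordering extension. Second, and more seriously, your termination measure in Step~2 is broken. The middle case of \eqref{rel:ww2} reads $\bbface{i}{\epsilon}\bbface{i}{\epsilon}=\vface{\bv{}{i+\epsilon}+1-\epsilon}\bbface{i}{\epsilon}$, so it converts two bordering extensions into a vertical face plus a bordering extension, \emph{increasing} the number of vertical faces---the opposite of what you assert. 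The new vertical face must then migrate leftward into the $\nu'$ block, and your lexicographic pair (number of vertical faces, number of inversions) does not decrease under this composite move. The paper sidesteps the problem because, after the inductive hypothesis, only one face ever needs to be threaded into an already-sorted block, so \cref{lem:reloc} suffices and the delicate cases of \eqref{rel:ww2}, \eqref{rel:vw} and \eqref{rel:sw} are confronted one at a time rather than as a global confluence/termination claim.
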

\begin{proof}
	By induction on the number $c$ of degenerated, standard and vertical faces in the factorisation of \cref{lem:vfact}.
	The case $c=0$ is trivial.
    Suppose any factorisation of length $c$ is unique up to the relations, and assume that $f$ decomposes into $c+1$ degenerated, standard and vertical faces.
	In particular, we have that $f=gf'$ or $f=f'g$, where $g$ is either a degenerated, standard or vertical face.
    Consequently, the map $f'$ has a factorisation of length $c$.
	Suppose $f=\vface{p}f'$, i.e.\ $g$ is a vertical face $\vface{p}$.
	By hypothesis, the factorisation of $f'$ is unique (up to the relations) and thus, can take the form of \cref{lem:vfact}:
    \begin{equation*}
        f'=\nu'\phi'=\vface{k_{\gamma}}\cdots\vface{k_{1}}\bbface{c_{\theta}}{\epsilon_{\theta}}\cdots\bbface{c_1}{\epsilon_1},
    \end{equation*}
	where the faces are arranged in increasing order.
	We conclude by  \cref{lem:reloc}.

	Suppose $f=\dface{p}f'$, i.e.\ $g$ is a degenerated face $\dface{p}$.
	The bottom composition $\sigma_p\fbot{f}'$ is the identity, hence $\fbot{f}'$ is a section of $\sigma_p$.
	The latter has only two sections: $\face_{p}$ or $\face_{p+1}$.
	In either case, we factor $f'$ as in \cref{lem:monofact} and obtain
	\begin{equation*}
		f'=\tau'\psi'\sface{p+\epsilon}\nu'\phi',
	\end{equation*}
	where $\delta'=\sface{p+\epsilon}$ is a single standard face, with $\epsilon=1,0$.

	The degenerated face $\dface{p}$ does not interact with $\tau'$, so the second equality of the relation \eqref{rel:dv} is used to move it through the vertical faces of $\tau'$.
	However, while passing through $\psi'$ using \eqref{rel:sw}, $\dface{p}$ may interact with one of the bordering extensions and produce a vertical face (as per the third equality of \eqref{rel:sw}).
	In such cases, the resulting vertical face is moved to $\tau'$ using \eqref{rel:vw} of \cref{prop:ovrel} (the second equality is not applicable) and \cref{lem:reloc}.
	Ultimately, $\dface{p}$ combines with the standard face to give $f=\tau'\psi'\bbface{p}{\epsilon}\nu'\phi'$, which allows us to move $\bbface{p}{\epsilon}$ at the beginning of the composition using the relations \eqref{rel:vw}, \eqref{rel:ww1} and \eqref{rel:ww2} of \cref{prop:ovrel}.
	The number of degenerated, standard and vertical faces in the composite $\tau'\psi'\nu'\phi'$ is less than $n$ and the bottom map is an identity, therefore we can use the inductive hypothesis to reorder the vertical faces and bordering extensions and obtain
	\begin{equation*}
		f=\vface{k_{\gamma}}\cdots\vface{k_{1}}\bbface{c_{\theta}}{\epsilon_{\theta}}\cdots\bbface{c_1}{\epsilon_1}\bbface{p}{\epsilon}.
	\end{equation*}
	We use \cref{lem:reloc} to correctly place $\bbface{p}{\epsilon}$ in the factorisation to have an increasing order.

	Suppose $f=f'\sface{p}$, i.e.\ $g$ is a standard face $\sface{p}$.
	As the composition $\fbot{f}'\face_p$ is the identity, $\fbot{f}'$ is a retraction of $\face_p$, and we either have $\fbot{f}'=\sigma_p$ or $\fbot{f}'=\sigma_{p-1}$.
	Therefore, we can factor $f$ as follows
	\begin{equation*}
	\begin{tikzcd}[row sep=1.2cm, column sep=1.2cm]
		\cdot & \cdot & \cdot & \cdot \\
		\cdot & \cdot & \cdot & \cdot
		\ar[from=1-1, to=1-2, hook, "\face_{\bv{}{p}}"]
		\ar[from=1-2, to=1-3, equal]
		\ar[from=1-3, to=1-4, hook, "\ftop{f}"]
		\ar[from=1-1, to=2-1, twoheadrightarrow, "\fn" description]
		\ar[from=1-2, to=2-2, twoheadrightarrow, "\fm" description]
		\ar[from=1-3, to=2-3, twoheadrightarrow, "\fk\ftop{f}" description]
		\ar[from=1-4, to=2-4, twoheadrightarrow, "\fk" description]
		\ar[from=2-1, to=2-2, hook, "\face_p"']
		\ar[from=2-2, to=2-3, hook, "\sigma_{p-\epsilon}"']
		\ar[from=2-3, to=2-4, equal]
		\ar[from=1-2, to=2-3, phantom, "\dface{p-\epsilon}" description]
		\ar[from=1-1, to=2-2, phantom, "\sface{p}" description]
	\end{tikzcd}
	\end{equation*}
	where $\epsilon=0,1$.
	The left and middle squares compose to give us a bordering extension $\bbface{p-\epsilon}{\epsilon}$.
	In addition, the right square as an identity at the bottom and the number of degenerated, standard and vertical faces factoring it is less than $n$.
	By using the inductive hypothesis, we factor it to obtain
	\begin{equation*}
		f=\vface{k_{\gamma}}\cdots\vface{k_{1}}\bbface{c_{\theta}}{\epsilon_{\theta}}\cdots\bbface{c_1}{\epsilon_1}\bbface{p-\epsilon}{\epsilon}.
	\end{equation*}
	We proceed as above to correctly place the $\bbface{p-\epsilon}{\epsilon}$ in the increasing order.
\end{proof}

\begin{lem}\label{lem:monouniqfact}
    The factorisation of \cref{lem:monofact} is unique up to the relations \eqref{rel:dd}--\eqref{rel:hd}.
\end{lem}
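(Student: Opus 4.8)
The plan is to follow the inductive scheme of \cref{lem:vfactuni}, now with the two extra blocks $\psi$ and $\delta$ to keep track of. We induct on the number $c$ of degenerated, standard and vertical faces appearing in the given factorisation, a bordering extension $\bbface{i}{\epsilon}$ being counted as the word $\dface{i}\sface{i+\epsilon}$. The case $c=0$ is trivial, and if $f$ is a vertical morphism then \cref{lem:vfactuni} already applies, so we may assume that $\fbot{f}$ is a non-identity monomorphism. For the inductive step, take an arbitrary factorisation of $f$ of length $c+1$ and peel off its leftmost generator $g$, of index $p$ say, writing $f=gf'$. Since $\fbot{g}$ equals one of $\sigma_p$, $\face_p$, $\id$ and $\fbot{f}=\fbot{g}\,\fbot{f'}$ is monic, both $\fbot{f'}$ and $\ftop{f'}$ are monic, so $f'$ is a horizontal morphism with a factorisation of length $c$; by the inductive hypothesis it can be brought to the canonical form $f'=\tau'\psi'\delta'\nu'\phi'$ of \cref{lem:monofact}, and it remains to absorb $g$ into this normal form.

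If $g=\vface{p}$, then $\vface{p}\tau'$ is already a composition of vertical faces, and \eqref{rel:vv} together with \cref{lem:reloc} reorders it into the leading $\tau$-block, leaving the other blocks untouched. If $g=\sface{p}$, we slide $\sface{p}$ to the right: past $\tau'$ by \eqref{rel:hv} (a standard face and a vertical face always commute — a standard vertex is never an inner marked one — up to a shift of one index) and past $\psi'$ by \eqref{rel:wd} (again a clean commutation with a possible index shift), until it reaches the standard block $\delta'$, where we absorb it and reorder via \eqref{rel:hh} and \cref{lem:reloc}. In both cases the outcome has the shape \eqref{diag:monofact}.

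The delicate case is $g=\dface{p}$, which is the analogue of the hard subcase of \cref{lem:vfactuni}. We migrate $\dface{p}$ rightwards through $\tau'$ and $\psi'$ using \eqref{rel:dv} and \eqref{rel:sw}: it commutes past the vertical faces and bordering extensions that it meets (with index shifts recorded through the dynamic-index convention $\dvp$), occasionally spawning, through the middle clause of \eqref{rel:sw}, a vertical face to its left, which is relayed back into the $\tau$-block by \eqref{rel:vv}, \eqref{rel:vw} and \cref{lem:reloc}. Because $\fbot{f}$ is monic — and the only relation that can eliminate a degenerated face is the middle clause of \eqref{rel:hd} — the face $\dface{p}$ cannot pass through $\delta'$ unconsumed, for otherwise it would reach the right-hand end of the word and force $\fbot{f}$ to factor through some $\sigma_{\dvp}$, which is not monic. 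Hence, inside $\delta'$, the face $\dface{p}$ meets a standard face whose index has become $\dvp$ or $\dvp+1$, and \eqref{rel:hd} collapses the pair into a bordering extension $\bbface{\dvp}{\epsilon}$. This bordering extension is then slid back to the left past the remainder of $\delta'$ by \eqref{rel:wd} and merged into the $\psi$-block using \eqref{rel:ww1}, \eqref{rel:ww2} and \cref{lem:reloc}, with the same incidental-vertical-face mechanism (via the $i=j$ clause of \eqref{rel:ww2} and \eqref{rel:vw}); the word is once more of the shape \eqref{diag:monofact}.

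The main obstacle is this last case: one must verify that $\dface{p}$ is genuinely forced to halt and merge inside $\delta'$, that it merges with exactly one standard face, and that the cascade of index shifts and of incidental vertical faces generated by the middle clauses of \eqref{rel:sw}, \eqref{rel:hd}, \eqref{rel:ww2} and \eqref{rel:vw} always terminates with all five blocks in increasing order. This is precisely where the dynamic-index notation $\dvp$ pays off, letting the bookkeeping proceed without tracking the exact values of the indices.
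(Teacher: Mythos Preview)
Your overall strategy matches the paper's, and your treatment of the cases $g=\sface{p}$ and $g=\dface{p}$ is essentially the same as theirs. The gap is in the case $g=\vface{p}$.

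You absorb $\vface{p}$ into the leftmost block $\tau'$ and declare the result ``of the shape \eqref{diag:monofact}''. But that shape is \emph{not} a normal form: a vertical face whose inserted vertex lies in a fibre over the image of $\fbot{f}$ can legitimately sit in either the $\tau$-block or the $\nu$-block while all five blocks remain in increasing order. Concretely, in the canonical factorisation of \cref{lem:monofact} the intermediate object between $\nu\phi$ and $\delta$ is the pullback $\fm$ of $\fk$ along $\fbot{f}$; for $f'$ (with codomain $\vbound{\fk}{p}$) the corresponding pullback $\fm'$ differs from $\fm$ exactly when $\fk(p)$ lies in the image of $\fbot{f}$. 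In that case your output $\tau''\psi'\delta'\nu'\phi'$ has $\nu'\phi'\colon\fn\to\fm'\neq\fm$, so it is a different factorisation of the required shape, not the one produced by \cref{lem:monofact}. Since you never argue that two factorisations of this shape are related by the relations, the induction does not close.

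The paper repairs exactly this point by splitting on whether $\fk(p)$ lies in the image of $\fbot{f}$. If it does not, your move is correct. If it does, one must instead commute $\vface{p}$ rightwards through $\tau'$, $\psi'$ and $\delta'$ (using \eqref{rel:vv}, \eqref{rel:vw}, \eqref{rel:hv}) --- these blocks build only the fibres \emph{outside} the image, so the exceptional clauses never fire --- until it lands next to $\nu'$, and then invoke \cref{lem:vfactuni} on $\vface{\dvp}\nu'\phi'$. Adding this case distinction to your argument would make it complete.
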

\begin{proof}
	We prove the uniqueness, up to the relations, of the factorisation \eqref{diag:monofact} by induction on the number $c$ of vertical, degenerated and standard faces in the decomposition of $f\:\fn \to \fk$.
	The case $c=0$ is trivial.
    Suppose any factorisation of length $c$ is unique, and assume that $f$ decomposes into $c+1$ vertical, degenerated and standard faces.
	In particular, we have that $f=gf'$ or $f=f'g$, where $g$ is either a degenerated, standard or vertical face.

    Consequently, the map $f'$ has a factorisation of length $c$.
	By hypothesis, the factorisation is unique and thus, can take the form of \cref{lem:monofact}:
    \begin{equation*}
        f'=\tau'\psi'\delta'\nu'\phi'=\vface{t_{\iota}}\cdots\vface{t_1}\bbface{i_{\alpha}}{\epsilon_{\alpha}}\cdots\bbface{i_{1}}{\epsilon_1}\sface{j_{\beta}}\cdots\sface{j_{1}}\vface{k_{\gamma}}\cdots\vface{k_{1}}\bbface{c_{\theta}}{\epsilon_{\theta}}\cdots\bbface{c_1}{\epsilon_1},
    \end{equation*}
	where the faces are arranged in increasing order.
	In this proof, we will always assume that $f=gf'$.

    First, suppose the morphism $g$ is a vertical face $\vface{p}$.
	Two cases can be distinguished: either $\fk(p)$ is in the image of $\fbot{f}$ or it is not.
	In the first case, this means that $\vface{p}$ inserts an inner marked vertex to build up the fibre $\ffib{\fn}{\fk(p)}$.
	The vertical face must travel to $\nu'$ to reach the correct position and, as the maps $\tau'$, $\psi'$ and $\delta'$ do not interact with $\dvp$ (they construct the fibres that are not in the image of $f$), we can use the relations \eqref{rel:vv}, \eqref{rel:hv}, \eqref{rel:dv} and \eqref{rel:vw} to do so and obtain
	\begin{equation*}
        f=\tau'\psi'\delta'\vface{\dvp}\nu'\phi'.
    \end{equation*}
	We use \cref{lem:vfactuni} to conclude this case.
	In the second case, since $\fk(p)$ is not in the image of $\fbot{f}$, the face $\vface{p}$ inserts an inner marked vertex in the fibre $\aint{p}{\fk}$ and hence must be placed in $\tau'$, which is done using \cref{lem:reloc}.

	Suppose $g$ is a degenerated face $\dface{p}$.
	Since the bottom map $\fbot{f}$ is a monomorphism, $\dface{p}$ must combine with a standard face to form a bordering extension.
	The second equality of \eqref{rel:dv} can be utilised to navigate $\dface{p}$ through $\tau'$ since they do not interact.
	In fact, the vertical faces cannot split the edges freshly marked by $\dface{p}$ as it comes at the end of the composition.
	The face $\dface{p}$ can interact with the composition of bordering extensions $\psi'$ if it marks an unmarked edge neighbouring a bordering vertex inserted by one of the bordering extensions in $\psi'$.
	In that case, there is a $1 \leq \alpha_0 \leq \alpha$ such that $p=i_{\alpha_0}-1+\epsilon_{\alpha_0}$ and after using \eqref{rel:sw} of \cref{prop:ovrel} to go through the first part of the bordering extensions and commute $\dface{p}\bbface{i_{\alpha_0}}{\epsilon_{\alpha_0}}$ we obtain
	\begin{equation*}
		\bbface{{i_{\alpha}}}{\epsilon_{\alpha}}\cdots\vface{\bv{}{i_{\alpha_0}+\epsilon_{\alpha_0}}}\dface{p}\cdots\bbface{{i_1}}{\epsilon_{\alpha_1}}.
	\end{equation*}
	We use the relations \eqref{rel:vv} and \eqref{rel:vw} to correctly place $\vface{\bv{}{i_{\alpha_0}+\epsilon_{\alpha_0}}}$ in $\tau'$ and use \cref{prop:ovrel} again to address the remaining bordering extensions with \eqref{rel:sw}.
	In the second case, if $\dface{p}$ does not interact with the $\psi'$ we simply use \eqref{rel:sw} to obtain the same result.
	It remains to combine $\dface{\dvp}$ with a standard face and relocate the bordering extension produced to $\psi'$.
	For that, there is a $1 \leq \beta_0 \leq \beta$ such that $\dvp = j_{\beta_0}-1$.
	We use \eqref{rel:dd} to move $\sface{j_{\beta_0}}$ next to $\dface{\dvp}$ and combine it to obtain the following
	\begin{equation*}
		f=\tau'\psi'\bbface{\dvp}{1}\sface{j_{\beta}}\cdots\widehat{\sface{j_{\beta_0}}}\cdots\sface{j_{1}}\nu'\phi'.
	\end{equation*}
	By \cref{lem:monouniqfact} we are done.

	Suppose the morphism $g$ is a standard face $\sface{p}$.
	As it is not possible to attach a bordering vertex on a standard vertex that has not been created yet, or to insert an inner marked vertex on a fresh standard vertex, $\sface{p}$ does not interact with $\tau'$ and $\psi'$.
	Therefore, we can use \eqref{rel:hv} and the relation \eqref{rel:wd} of \cref{prop:ovrel} to navigate through  $\tau'$ and $\psi'$ and obtain
	\begin{equation*}
		f=\tau'\psi'\sface{p}\delta'\nu'\phi'.
	\end{equation*}
	\cref{lem:reloc} allows us to correctly place $\sface{p}$ in the factorisation of $\delta'$.
\end{proof}

\begin{thm}\label{thm:grdu}
	The factorisation of \cref{thm:genreldec} is unique up to relations \eqref{rel:dd}--\eqref{rel:hd}.
\end{thm}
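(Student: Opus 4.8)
The plan is to run, at the level of the full category, the same induction already carried out for the two restricted cases in \cref{lem:vfactuni,lem:monouniqfact}, but now without assuming that the bottom map $\fbot f$ of $f\:\fn\to\fk$ is a monomorphism. We induct on the number $c$ of degenerated, standard and vertical faces occurring in a given factorisation of $f$; the case $c=0$ is immediate. For the inductive step, peel one generator off the front, writing $f=gf'$ with $g$ a single face and $f'$ a map admitting a factorisation of length $c$; by the inductive hypothesis we may take $f'$ already in the canonical form of \cref{thm:genreldec}, say $f'=\tau'\psi'\delta'\nu'\phi'\sigma'$ with $\sigma'=\dface{l_{\lambda}}\cdots\dface{l_1}$. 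What remains is to commute $g$ rightwards into its appropriate block by means of the relations \eqref{rel:dd}--\eqref{rel:hd} and the derived relations \eqref{rel:ww1}--\eqref{rel:sw} of \cref{prop:ovrel}, and to restore increasing order in the affected block using \cref{lem:reloc}.

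If $g$ is a standard face $\sface p$ or a vertical face $\vface p$, then $\fbot f$ differs from $\fbot{f'}$ only by precomposition with a simplicial face or with an identity, so the epi--mono factorisation of $\fbot f$ has the same epimorphism part as that of $\fbot{f'}$; consequently the trailing block $\sigma'$ is left exactly in place, and the argument is word for word the one in \cref{lem:monouniqfact}: a standard face moves through $\tau'$ and $\psi'$ via \eqref{rel:hv} and \eqref{rel:wd} to its slot in $\delta'$, while a vertical face moves through $\tau'$, $\psi'$ and $\delta'$ via \eqref{rel:vv}, \eqref{rel:vw} and \eqref{rel:hv} to its slot in $\nu'$ (or is already in $\tau'$), leaving $\nu'$, $\phi'$ and $\sigma'$ undisturbed.

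The genuinely new case is $g=\dface p$, for which $\fbot f=\sigma_p\fbot{f'}$ need no longer be a monomorphism. I would push $\dface p$ rightwards into $f'$ as far as the relations allow: through the vertical faces of $\tau'$ and the bordering extensions of $\psi'$ via \eqref{rel:dv} and \eqref{rel:sw}, relocating along the way (by \cref{lem:reloc} and the relations of \cref{prop:ovrel}) any vertical face spawned by a special clause, and then through the standard faces of $\delta'$ via \eqref{rel:hd}. One of two things must then happen. Either $\dface p$ slides past every standard face of $\delta'$ (outer clauses of \eqref{rel:hd}) and, via \eqref{rel:dv} and \eqref{rel:sw}, past $\nu'$ and $\phi'$, finally merging by \eqref{rel:dd} into the trailing block $\sigma'$ --- exactly the case in which the image of $\fbot{f'}$ contains both vertices that $\sigma_p$ identifies, so that the epimorphism part of $\fbot f$ genuinely enlarges. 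Or $\dface p$ meets the critical clause $i=j+\epsilon$ of \eqref{rel:hd} and is consumed, together with a single standard face of $\delta'$, into a bordering extension $\bbface{p}{\epsilon}$, which then migrates leftwards, out of the remaining standard faces and into $\psi'$, by \eqref{rel:wd}, \eqref{rel:ww1} and \eqref{rel:ww2}. In either subcase a final appeal to the inductive hypothesis and to \cref{lem:reloc} reorders the affected block and yields the canonical form.

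I expect the crux to be the bookkeeping in this last case: one must check that the alternative ``$\dface p$ is absorbed into $\sigma$'' versus ``$\dface p$ is absorbed into a bordering extension of $\psi$'' is genuinely forced --- a degenerated face meeting a critical standard face has no option but to combine with it --- and that the running indices, which we track through the dynamic symbol $\dvp$, remain consistent, so that the normal form obtained is exactly the one prescribed by the (unique) epi--mono factorisation of $\fbot f$ together with \cref{lem:monouniqfact} applied to the monomorphism part.
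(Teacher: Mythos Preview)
Your proposal is correct and follows the same inductive strategy as the paper: induct on the length, peel off a leftmost generator, invoke \cref{lem:monouniqfact} for the vertical and standard cases, and for a degenerated face $\dface{p}$ push it rightwards through the blocks until it either lands in $\sigma'$ or is consumed with a standard face into a bordering extension. The paper organises the degenerated case slightly differently---it first splits on whether the vertex $p$ lies in the image of the monomorphism part of $\fbot{f'}$, and only then traces through the blocks---but the content is the same.

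One small refinement: when $\dface{p}$ is consumed into a bordering extension, you say it migrates ``into $\psi'$'', but in fact it can land in either $\psi'$ or $\phi'$, depending on whether the fibre it now borders is one of the new fibres created by $\delta'$ or one already present (pulled back) in the image of $\fbot{\delta'}$. The paper handles the latter possibility explicitly (repositioning the resulting $\bbface{p}{\epsilon}$ to $\phi'$ via \eqref{rel:ww1}--\eqref{rel:ww2}). Your closing remark that the normal form is dictated by the epi--mono factorisation of $\fbot f$ together with \cref{lem:monouniqfact} on the monomorphism part does implicitly cover this, so it is not a genuine gap---just a place where the bookkeeping you flag as ``the crux'' needs one more case distinction than your sketch records.
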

\begin{proof}
	We prove the uniqueness, up to the relations, of the factorisation by induction on the number $c$ of degenerated, vertical and standard faces in the decomposition of $f$.
	The case $c=0$ is trivial. Suppose any factorisation of length $c$ into degenerated, vertical and standard faces is unique, and assume that $f$ decomposes into $c+1$ faces.
	In particular, we have that $f=gf'$, where $g$ is either a vertical, standard or degenerated face. Consequently, the map $f'$ has a factorisation of length $c$.
	By hypothesis, the factorisation is unique and thus, can take the form \eqref{eq:dvhdec}.
	Suppose $g$ is either a vertical face $\vface{p}$ or a standard face $\sface{p}$.
	By \cref{lem:monouniqfact}, we are done.
	Therefore, suppose $g$ is a degenerated face $\dface{p}$.
	If $\fk(p)$ is not in the image of $\fbot{f}$, then $\dface{p}$ must interact with a standard face from the factorisation of $\delta'$.
	Specifically, the degenerated face $\dface{p}$ has to combine with the appropriate standard face to form a bordering extension.
	In this situation, we once again apply \cref{lem:monouniqfact} to conclude the proof.
	If $\fk(p)$ is in the image of $\fbot{f}$, then $\dface{p}$ does not interact with $\tau'$ and $\psi'$.
	Therefore, we can use \eqref{rel:dv} and the relation \eqref{rel:sw} of \cref{prop:ovrel} and to pass through them and obtain
	\begin{equation*}
		f=\tau'\psi'\dface{p}\delta'\nu'\phi'\sigma'.
	\end{equation*}
	For the interaction with the standard faces, we have to take care of three cases: for $p<j_1-1$, for $p>j_{\beta}$ and for $j_{\beta}\geq p\geq j_{1}-1$.
	In the first and second cases, we can use the third and first equality, respectively, of \eqref{rel:hd} to navigate through $\sface{j_{\beta}}\cdots\sface{j_{1}}$.
	Suppose $j_{\beta}\geq p\geq j_{1}-1$, then there is a maximal $1\leq\beta_0\leq \beta$ such that $p\geq j_{\beta_0}-1$. We use the third equality of \eqref{rel:hd} to obtain
	\begin{equation*}
		\sface{j_{\beta}-1}\cdots\dface{p}\sface{j_{\beta_0}}\cdots\sface{j_{1}}.
	\end{equation*}
	In the case where $p=j_{\beta_0}-1$ or $p=j_{\beta_0}$, we use the second equality of \eqref{rel:hd} to produce the bordering extension $\bbface{p}{\epsilon}$.
	Since $\fk(p)$ is in the image of $\fbot{f}$, we reposition it to $\phi'$ by using the relations \eqref{rel:ww1} and \eqref{rel:ww2} of \cref{prop:ovrel,lem:monouniqfact} to conclude.
	Otherwise, $p>j_{\beta_0}$ and, similarly as in the second case, we use the first equality of \eqref{rel:hd} to navigate through the rest of the standard faces and obtain
	\begin{equation*}
		f=\tau'\psi'\delta'\dface{\dvp}\nu'\phi'\sigma'.
	\end{equation*}
	The relations \eqref{rel:sw} from \cref{prop:ovrel} and \eqref{rel:dv} can be used to traverse the composition of vertical faces $\nu'$ and bordering extensions $\phi'$, and the vertical faces emerging can be repositioned if necessary.
	Finally, for the arrangement of the degenerated faces, we use \cref{lem:reloc}.
\end{proof}

Observe that, from the proofs of \cref{lem:monofact,lem:monouniqfact,thm:genreldec,thm:grdu}, we can recover the ternary factorisation system $(\fDiag,\fVert,\fHorz)$ mentioned in \cref{para:tfact}.

\begin{prop}\label{cor:tfs}
	The classes $(\fDiag,\fVert,\fHorz)$ form a ternary factorisation system on $\fDel$.
\end{prop}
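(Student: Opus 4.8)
The plan is to read the ternary factorisation system off the face decomposition of \cref{thm:genreldec} together with its uniqueness statement \cref{thm:grdu}. Recall that exhibiting $(\fDiag,\fVert,\fHorz)$ as a ternary factorisation system amounts to checking that each of the three classes contains the isomorphisms and is closed under composition, and that every morphism $f$ of $\fDel$ factors as $f=h\,v\,d$ with $d\in\fDiag$, $v\in\fVert$, $h\in\fHorz$, uniquely up to a (necessarily unique, $\fDel$ having only identity isomorphisms) triple of isomorphisms; the two constituent orthogonal factorisation systems are then recovered by collapsing two adjacent factors of this $h\,v\,d$-factorisation. The closure and isomorphism conditions are immediate from the square presentations of \cref{para:tfact}: a morphism lies in $\fDiag$ exactly when its top leg is an identity, in $\fVert$ exactly when its bottom leg is an identity, and in $\fHorz$ exactly when its bottom leg is a monomorphism and its defining square is a pullback; identities compose to identities, monomorphisms to monomorphisms, and pullback squares paste to pullback squares.

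For existence I would simply regroup the decomposition $f=\tau\psi\delta\,\nu\phi\,\sigma$ of \eqref{eq:dvhdec}. Set $d:=\sigma$ (the block of degenerated faces): its top leg is an identity, so $d\in\fDiag$. Set $v:=\nu\phi$ (the vertical faces and bordering extensions occurring before the standard faces $\delta$): its bottom leg is an identity, so $v\in\fVert$. Set $h:=\tau\psi\delta$: by the proof of \cref{lem:monofact} this sub-composite is precisely the morphism built there as the pullback of $\fk$ along the monomorphism $\fbot{\delta}$, hence a $\fat{\pi}$-cartesian lift of a monomorphism, so $h\in\fHorz$. This yields $f=h\,v\,d$.

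For uniqueness, suppose $f=h\,v\,d=h'\,v'\,d'$ with the factors in the respective classes. I would expand each factor into faces: a morphism of $\fDiag$ is a composite of degenerated faces; a morphism of $\fVert$ is, by \cref{lem:vfact} and \cref{lem:vfactuni}, uniquely (up to the relations) a block of vertical faces followed by a block of bordering extensions with increasing indices; and a morphism of $\fHorz$, being $\fat{\pi}$-cartesian over a monomorphism, receives from \cref{lem:monofact} a decomposition with no degenerated faces (its bottom leg being monic) and with trivial inner vertical/bordering block (cartesianness forcing its domain to be the full pullback), hence of the shape: vertical faces, then bordering extensions, then standard faces, with increasing indices. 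Concatenating, both $h\,v\,d$ and $h'\,v'\,d'$ present $f$ as a composite of faces in precisely the normal form of \cref{thm:genreldec}; by \cref{thm:grdu} the two presentations agree up to the relations \eqref{rel:dd}--\eqref{rel:hd}, and since those relations only permute indices inside a block and never alter the morphism a block represents, the blocks match factor by factor. Hence $d=d'$, $v=v'$, $h=h'$, which is the desired uniqueness, and $(\fDiag,\fVert,\fHorz)$ is a ternary factorisation system.

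The step I expect to be the main obstacle is exactly this face-theoretic description of $\fHorz$. A cartesian lift of a monomorphism is \emph{not} in general a composite of standard faces alone---it can also involve vertical faces and bordering extensions that build up the ``new'' fibres, those not in the image of the bottom leg---and one must argue with some care that cartesianness is precisely the condition killing the block that acts on the pulled-back fibres, so that the face expansion of $h\,v\,d$ genuinely lands in the rigid six-block pattern of \cref{thm:genreldec} to which \cref{thm:grdu} applies, and conversely that every composite in that pattern arises from some $h\,v\,d$.
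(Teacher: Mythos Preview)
Your approach is essentially the paper's: regroup the six-block decomposition \eqref{eq:dvhdec} into $d=\sigma$, $v=\nu\phi$, $h=\tau\psi\delta$ and observe these lie in $\fDiag$, $\fVert$, $\fHorz$ respectively. The paper's proof stops there; your treatment of uniqueness is genuinely more complete than what the paper writes down.

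That said, your uniqueness argument is working harder than it needs to. The obstacle you flag---pinning down exactly which face words occur as expansions of $\fHorz$-morphisms and then invoking \cref{thm:grdu} on the concatenated normal forms---can be bypassed entirely by arguing directly from the square descriptions of \cref{para:tfact}. Given two factorisations $f=hvd=h'v'd'$, apply $\fat{\pi}$: the bottoms of $d,d'$ are epimorphisms and the bottoms of $hv,h'v'$ are monomorphisms, so both give the epi--mono factorisation of $\fbot{f}$ in $\Delta$, which is unique. Since a diagonal morphism is determined by its bottom leg (the top is an identity), $d=d'$. Since $h,h'$ are $\fat{\pi}$-cartesian over the same monomorphism with the same target $\fk$, they coincide. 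Then $v=v'$ follows. This three-line argument uses only the ambifibration structure of $\fat{\pi}$, not the face calculus, and in particular it does not require you to verify that a cartesian lift has trivial inner $\nu\phi$-block in its \cref{lem:monofact} decomposition.

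So your proof is correct; the paper's is terser but less explicit on uniqueness; and the cleanest uniqueness argument avoids faces altogether.
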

\begin{proof}
	Let $f\:\fn \to \fk$ be a morphism in $\fDel$.
	By \cref{thm:genreldec,thm:grdu}, we can factor $f$ uniquely up to the relations \eqref{rel:dd}--\eqref{rel:hd} as in \eqref{eq:dvhdec}
	\begin{equation*}
		f=\vface{t_{\iota}}\cdots\vface{t_1}\bbface{i_{\alpha}}{\epsilon_{\alpha}}\cdots\bbface{i_{1}}{\epsilon_1}\sface{j_{\beta}}\cdots\sface{j_{1}}\vface{k_{\gamma}}\cdots\vface{k_{1}}\bbface{c_{\theta}}{\epsilon_{\theta}}\cdots\bbface{c_1}{\epsilon_1}\dface{l_{\lambda}}\cdots\dface{l_{1}}.
	\end{equation*}
	Therefore, it suffices to remark that the composition $\vface{t_{\iota}}\cdots\vface{t_1}\bbface{i_{\alpha}}{\epsilon_{\alpha}}\cdots\bbface{i_{1}}{\epsilon_1}\sface{j_{\beta}}\cdots\sface{j_{1}}$ is a horizontal morphism, the composition $\vface{k_{\gamma}}\cdots\vface{k_{1}}\bbface{c_{\theta}}{\epsilon_{\theta}}\cdots\bbface{c_1}{\epsilon_1}$ is a vertical morphism, and the composition $\dface{l_{\lambda}}\cdots\dface{l_{1}}$ is a diagonal morphism.
\end{proof}

In particular, we can say more about the diagonal, vertical and horizontal morphisms.

\begin{cor}
	Any morphism in
	\begin{itemize}
		\item $\fDiag$ can be factored as $\dface{l_{\lambda}}\cdots\dface{l_{1}}$,
		\item $\fVert$ can be factored as $\vface{k_{\gamma}}\cdots\vface{k_{1}}\bbface{c_{\theta}}{\epsilon_{\theta}}\cdots\bbface{c_1}{\epsilon_1}$,
		\item $\fHorz$ can be factored as $\vface{t_{\iota}}\cdots\vface{t_1}\bbface{i_{\alpha}}{\epsilon_{\alpha}}\cdots\bbface{i_{1}}{\epsilon_1}\sface{j_{\beta}}\cdots\sface{j_{1}}$,
	\end{itemize}
	where the indices are in increasing order.\qed
\end{cor}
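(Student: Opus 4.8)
The plan is to deduce this corollary from \cref{thm:genreldec,thm:grdu} together with the ternary factorisation system of \cref{cor:tfs}; once those are in place there is essentially nothing left to prove. First I would recall that \cref{thm:genreldec,thm:grdu} attach to every map $f\:\fn\to\fk$ the factorisation \eqref{eq:dvhdec}, unique up to the relations \eqref{rel:dd}--\eqref{rel:hd}, and that---as already noted in the proof of \cref{cor:tfs}---its three grouped factors, namely $h=\vface{t_{\iota}}\cdots\sface{j_{1}}$ (the vertical faces, bordering extensions and standard faces), $v=\vface{k_{\gamma}}\cdots\bbface{c_1}{\epsilon_1}$ (the remaining vertical faces and bordering extensions) and $d=\dface{l_{\lambda}}\cdots\dface{l_{1}}$ (the degenerated faces), lie respectively in $\fHorz$, $\fVert$ and $\fDiag$. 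Thus $f=h\,v\,d$ is the ternary factorisation of $f$ for the system $(\fDiag,\fVert,\fHorz)$ of \cref{cor:tfs}, with the indices in each block in increasing order.

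Next I would record two elementary points. Each of $\fDiag$, $\fVert$, $\fHorz$ contains all identities of $\fDel$: an identity square is $\fat{\pi}$-vertical, and $\fat{\pi}$-(co)cartesian lifts of identities are identities. And $\fDel$ is skeletal, since an isomorphism $\fn\to\fk$ in $(\Arr{\Delta}_{-})_{\Mono}$ restricts to isomorphisms of the underlying ordinals, which are identities in $\Delta$, so $\fn=\fk$ and the isomorphism is $\id_{\fn}$. Hence the ternary factorisation furnished by \cref{cor:tfs} is unique on the nose, so the triple $(h,v,d)$ is uniquely determined by $f$.

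The three cases of the corollary are now symmetric. If $f\in\fDiag$ then $f=\id_{\fk}\,\id_{\fk}\,f$ is a factorisation with first factor in $\fHorz$, second in $\fVert$ and third in $\fDiag$; by uniqueness it must coincide with $(h,v,d)$, so $h=v=\id$ and $f=d=\dface{l_{\lambda}}\cdots\dface{l_{1}}$. Likewise $f\in\fVert$ gives $f=\id_{\fk}\,f\,\id_{\fn}$, forcing $h=d=\id$ and $f=v=\vface{k_{\gamma}}\cdots\vface{k_{1}}\bbface{c_{\theta}}{\epsilon_{\theta}}\cdots\bbface{c_1}{\epsilon_1}$, and $f\in\fHorz$ gives $f=f\,\id_{\fn}\,\id_{\fn}$, forcing $v=d=\id$ and $f=h=\vface{t_{\iota}}\cdots\vface{t_1}\bbface{i_{\alpha}}{\epsilon_{\alpha}}\cdots\bbface{i_{1}}{\epsilon_1}\sface{j_{\beta}}\cdots\sface{j_{1}}$. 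In every case the increasing order of the indices is inherited from \cref{thm:genreldec}. I do not anticipate a genuine obstacle here---the statement merely repackages \cref{thm:genreldec,thm:grdu,cor:tfs}---the only point meriting a word of care being the on-the-nose uniqueness of the ternary factorisation (so that the two unused factors are literally identities rather than isomorphisms), which is exactly what skeletality of $\fDel$ provides once \cref{cor:tfs} is invoked.
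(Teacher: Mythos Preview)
Your argument is correct and it does exactly what the paper intends: the corollary carries no proof in the paper (only a \qed), signalling that it is an immediate repackaging of \cref{thm:genreldec,thm:grdu,cor:tfs}, and your write-up spells out precisely that reading.

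There is a small difference of emphasis worth noting. You take the abstract route: invoke the ternary factorisation system of \cref{cor:tfs}, observe that $\fDel$ is skeletal, and deduce on-the-nose uniqueness of the factorisation $f=hvd$, whence the two unused factors collapse to identities. The paper's implicit argument is more concrete: one simply inspects the \emph{construction} of the factorisation in \cref{thm:genreldec} (and its sub-lemmas) and sees directly that the superfluous blocks are empty. For $f\in\fDiag$ the top map is an identity, so every $\ftop{\phi},\ftop{\nu},\ftop{\delta},\ftop{\psi},\ftop{\tau}$ is an identity monomorphism; for $f\in\fVert$ one is exactly in the situation of \cref{lem:vfact}; and for $f\in\fHorz$ the bottom is already a monomorphism (so $\sigma=\id$) and, crucially, the square is already the pullback taken in \eqref{diag:pbmono}, so $g=\id$ and hence $\nu\phi=\id$. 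Your route is cleaner to state; the direct inspection avoids relying on uniqueness/orthogonality of the ternary system (which the proof of \cref{cor:tfs} only sketches via \cref{thm:grdu}). Either way, there is no obstacle.
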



\printbibliography
\end{document}